\newtheorem{theorem}{Theorem}[section]
\newtheorem{lemma}[theorem]{Lemma}
\newdefinition{rmk}{Remark}[section]
\newdefinition{definition}{Definition}
\newtheorem{example}{Example}[section]
\newproof{pf}{Proof}
\numberwithin{equation}{section}
\numberwithin{figure}{section}
\numberwithin{table}{section}
\begin{document}
\begin{frontmatter}

  \title{A physical-constraints-preserving
genuinely multidimensional HLL scheme
for the special relativistic hydrodynamics}

  \author{Dan Ling}
  \ead{danling@xjtu.edu.cn}
  \address{School of Mathematics and Statistics, Xi'an Jiaotong University, Xi'an 710049, P.R. China}
  \author{Huazhong Tang\corref{cor1}}
  \ead{hztang@math.pku.edu.cn}
  \address{Center for Applied Physics and Technology, HEDPS and LMAM,
    School of Mathematical Sciences, Peking University, Beijing 100871, P.R. China}
  \cortext[cor1]{Corresponding author. Fax:~+86-10-62751801.}

  \begin{abstract}
This paper   develops the  genuinely multidimensional HLL Riemann solver
and finite volume scheme for the two-dimensional  special relativistic hydrodynamic 
equations on Cartesian meshes
and  studies its physical-constraint-preserving (PCP) property.
Several  numerical results demonstrate the accuracy, the performance and
 the resolution of the shock waves and the genuinely multi-dimensional wave structures of the proposed  PCP scheme.
  \end{abstract}

  \begin{keyword}
    Genuinely multidimensional scheme   \sep HLL
    \sep physical-constraint-preserving property \sep special relativistic hydrodynamics
  \end{keyword}

\end{frontmatter}
\section{Introduction}\label{intro}

The paper is concerned with the physical-constraints-preserving (PCP) genuinely multidimensional finite volume scheme for
the special relativistic hydrodynamics (RHD),
  which plays a major role in astrophysics, plasma physics and nuclear
physics etc.,
where the
fluid moves at extremely high velocities
near the speed of light so that the relativistic effects become important.
 In the (rest) laboratory frame, the two-dimensional (2D) special RHD equations governing an ideal fluid flow can be written in the divergence form
\begin{equation}\label{eq27}
\frac{\partial\bm{U}}{\partial t}+\sum\limits_{\ell=1}^2\frac{\partial\bm{F}_\ell(\bm{U})}
{\partial x_\ell}=0,
\end{equation}
where the conservative vector $\bm{U}$ and the flux $\bm{F}_\ell$ are defined respectively by
\begin{equation}\label{eq28}
\bm{U}=(D, \bm{m}, E)^T, \quad \bm{F}_\ell=\left(Du_\ell,
\bm{m}u_\ell+p\bm{e}_\ell,(E+p)u_\ell \right)^T,\ \ \ell=1,2,
\end{equation}
here $D=\rho\gamma$, $\bm{m}=Dh\gamma\bm{u}$, $E=Dh\gamma-p$ and $p$ are the mass, momentum and
total energy relative to the laboratory frame and the gas pressure, respectively,
$\bm{e}_\ell$ is the row vector denoting the $\ell$-th row of the unit matrix of size $2$, $\rho$ is the   rest-mass density, $\bm{u}=(u,v)$ is the fluid velocity vector,
  $\gamma=1/\sqrt{1-|\bm{u}|^2}$ is the Lorentz factor,
 $h=1+e+\frac{p}{\rho}$ is the specific enthalpy, and $e$ is the specific internal energy.
Note that  natural units (i.e., the speed of light $c = 1$) has been used.
The system \eqref{eq27} should be closed via the equation of state (EOS), which has a general form of $p=p(\rho,e)$.
The current discussion is restricted to
the perfect gas, whose EOS is formulated as
\begin{equation}\label{eq15}
p=(\Gamma-1)\rho e,
\end{equation}
with the adiabatic index $\Gamma\in(1, 2]$. Such restriction on $\Gamma$ is reasonable
under the compressibility assumptions, and $\Gamma$ is taken as 5/3 for
the mildly relativistic case and 4/3 for
the ultra-relativistic case. In this case, for $i=1,2$,
the Jacobian matrix $\bm{A}_i(\bm{U})=\partial\bm{F}_i/\partial\bm{U}$ of the system \eqref{eq27} has   $4$
real eigenvalues, which are ordered from smallest to biggest as follows
\begin{equation*}
\begin{aligned}
&\lambda_{i}^{(1)}(\bm{U})=\frac{u_i(1-c_s^2)-c_s\gamma^{-1}\sqrt{1-u_i^2-c_s^2(|\bm{u}|^2-u_i^2)}}
{1-c_s^2|\bm{u}|^2},\\
&\lambda_{i}^{(2)}(\bm{U})=\lambda_{i}^{(3)}(\bm{U})=u_i,\\
&\lambda_{i}^{(4)}(\bm{U})=\frac{u_i(1-c_s^2)+c_s\gamma^{-1}\sqrt{1-u_i^2-c_s^2(|\bm{u}|^2-u_i^2)}}
{1-c_s^2|\bm{u}|^2},
\end{aligned}
\end{equation*}
where
$c_s$ is the speed of sound   defined by
\begin{equation*}
c_s=\sqrt{{\Gamma p}/{(\rho h)}},
\end{equation*}
and satisfies the following inequality
\begin{equation*}
c_s^2=\frac{\Gamma p}{\rho h}=\frac{\Gamma p}{\rho+\frac{p}{\Gamma-1}+p}=\frac{(\Gamma-1)\Gamma p}{(\Gamma-1)\rho+\Gamma p}<\Gamma-1\le1.
\end{equation*}

Due to the relativistic effect, especially the appearance of the Lorentz factor,
the system \eqref{eq27} become more strongly nonlinear than the non-relativistic case,
which leads to that their analytic treatment
is extremely difficult and challenging, except in some special case, for
instance, the 1D Riemann problem or the isentropic problem \cite{marti1,pant,lora}.
Because there are no explicit expressions of the primitive variable vector
$\bm{V}=(\rho,\bm{u},p)^T$ and the flux vectors $\bm{F}_i$ in terms of $\bm{U}$,  the value of $\bm{V}$   cannot be explicitly recovered from the $\bm{U}$ and should iteratively solve  the nonlinear equation, e.g. the pressure equation
\begin{equation*}
E+p=D\gamma+\frac{\Gamma}{\Gamma-1}p\gamma^2,
\end{equation*}
with
$\gamma=\big(1-|\bm{m}|^2/(E+p)^2\big)^{-1/2}$.
%
%
Besides those,
there are some physical constraints, such as
$\rho>0, p>0$ and $E\ge D$, as well as that the velocity can not exceed the speed of light,
i.e. $|\bm{u}|<c=1$.
For the RHD problems with large Lorentz factor or low density or pressure, or strong discontinuity, it is easy to obtain the negative density or pressure, or the larger velocity than the speed of light in numerical computations,
so that the eigenvalues of the Jacobian matrix or the Lorentz factor may become imaginary, leading directly to the ill-posedness of the discrete problem. Consequently,
there is great necessity and significance to develop robust and accurate
 physical-constraints-preserving (PCP) numerical schemes  for \eqref{eq27}, whose  solutions can satisfy the intrinsic physical constraints, or belong to the admissible states set  \cite{wu2015}
\begin{equation*}
\mathcal{G}=\left\{\bm{U}=(D,\bm{m},E)^T\big|~\rho>0, p>0, |\bm{u}|<1\right\},
\end{equation*}
or equivalently
\begin{equation*}
\mathcal{G}=\left\{\bm{U}=(D,\bm{m},E)^T\big|~D>0,E-\sqrt{D^2+|\bm{m}|^2>0}\right\}.
\end{equation*}
Based on that, one can prove some useful properties of $\mathcal{G}$, see \cite{wu2015}.
Although the following second lemma  is  formally different from Lemma 2.3, its proof can be completed by using the latter.
A slightly different proof is also given in \ref{appendix}.

\begin{lemma}\label{lem1}
The admissible state set $\mathcal{G}$ is convex.
\end{lemma}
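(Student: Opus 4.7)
The plan is to verify convexity directly from the second (equivalent) characterization of $\mathcal{G}$, namely $D>0$ and $E-\sqrt{D^2+|\bm{m}|^2}>0$, by writing $\mathcal{G}$ as the intersection of two manifestly convex sets and checking each.

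First I would pick arbitrary $\bm{U}_1=(D_1,\bm{m}_1,E_1)^T$ and $\bm{U}_2=(D_2,\bm{m}_2,E_2)^T$ in $\mathcal{G}$ and any $\lambda\in[0,1]$, and set $\bm{U}_\lambda=\lambda\bm{U}_1+(1-\lambda)\bm{U}_2=:(D_\lambda,\bm{m}_\lambda,E_\lambda)^T$. The first constraint $D_\lambda=\lambda D_1+(1-\lambda)D_2>0$ is immediate because $D>0$ cuts out a half space in $\mathbb{R}^4$, so this part is nothing to prove.

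The heart of the argument is then the second constraint. I would observe that $\sqrt{D^2+|\bm{m}|^2}$ is simply the Euclidean norm of the vector $(D,\bm{m})\in\mathbb{R}^3$, and is therefore a convex function of $\bm{U}$. Consequently the function $\bm{U}\mapsto E-\sqrt{D^2+|\bm{m}|^2}$ is concave on $\mathbb{R}^4$ as the sum of a linear function and a concave function, and its strict superlevel set $\{\bm{U}:E-\sqrt{D^2+|\bm{m}|^2}>0\}$ is convex. Concretely, by the triangle inequality,
\begin{equation*}
\sqrt{D_\lambda^2+|\bm{m}_\lambda|^2}\le\lambda\sqrt{D_1^2+|\bm{m}_1|^2}+(1-\lambda)\sqrt{D_2^2+|\bm{m}_2|^2}<\lambda E_1+(1-\lambda)E_2=E_\lambda,
\end{equation*}
where the last strict inequality uses $\bm{U}_1,\bm{U}_2\in\mathcal{G}$ (with the usual convention that the strict inequality is preserved unless $\lambda\in\{0,1\}$, in which case it is trivially inherited from one of the endpoints).

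There isn't really a hard step here; the only thing to be mindful of is choosing the right representation of $\mathcal{G}$. The direct characterization via $\rho>0$, $p>0$, $|\bm{u}|<1$ would be awkward because $\rho,p,\bm{u}$ are not explicit functions of $\bm{U}$ and convex combinations in $\bm{U}$ do not correspond to convex combinations in $\bm{V}$. Using the equivalent form from the excerpt turns the proof into a one-line application of the convexity of the Euclidean norm, which is why the claim is immediate once one has that equivalent characterization in hand.
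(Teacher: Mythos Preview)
Your argument is correct: working with the equivalent description $\mathcal{G}=\{D>0,\ E-\sqrt{D^2+|\bm{m}|^2}>0\}$ and invoking the convexity of the Euclidean norm (equivalently, the triangle inequality) is exactly the right move, and the handling of the endpoint cases $\lambda\in\{0,1\}$ is fine.

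Note, however, that the paper does not actually supply its own proof of Lemma~\ref{lem1}; it simply records the result and defers to \cite{wu2015}. The argument you wrote is essentially the standard one given there, so there is no substantive divergence to report.
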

\begin{lemma}\label{lem2}
If assuming $\bm{U},\bm{U}_1,\bm{U}_2\in \mathcal{G}$, then:
\begin{enumerate}[(\romannumeral1)]
\item $\kappa\bm{U}\in\mathcal{G}$ for all $\kappa>0$.
\item $a_1\bm{U}_1+a_2\bm{U}_2\in\mathcal{G}$ for all $a_1,a_2>0$.
\item $\alpha\bm{U}-\bm{F}_i(\bm{U}),-\beta\bm{U}+\bm{F}_i(\bm{U})\in \mathcal{G}$ for
$\alpha\ge\lambda_i^{(4)}(\bm{U})$, $\beta\le\lambda_i^{(1)}(\bm{U})$ and $i=1,2$.
\end{enumerate}
\end{lemma}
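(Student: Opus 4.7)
The plan is to exploit the equivalent characterization $\mathcal{G}=\{\bm{U}:D>0,\;E>\sqrt{D^2+|\bm{m}|^2}\}$, whose two defining inequalities are positive-homogeneous of degree one in $\bm{U}$, and to combine it with the convexity supplied by Lemma~\ref{lem1}. Parts (i) and (ii) are then nearly immediate: for (i), both inequalities survive multiplication by any $\kappa>0$; for (ii) one writes $a_1\bm{U}_1+a_2\bm{U}_2=(a_1+a_2)[\theta\bm{U}_1+(1-\theta)\bm{U}_2]$ with $\theta=a_1/(a_1+a_2)\in(0,1)$, uses Lemma~\ref{lem1} to place the bracketed convex combination in $\mathcal{G}$, and then applies (i) with $\kappa=a_1+a_2$.

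Part (iii) is the substantive claim. By the rotational symmetry of the flux it suffices to treat $i=1$, and the case $-\beta\bm{U}+\bm{F}_1(\bm{U})$ is parallel to $\alpha\bm{U}-\bm{F}_1(\bm{U})$ after a sign change, so I focus on $\bm{W}:=\alpha\bm{U}-\bm{F}_1(\bm{U})$ with $\alpha\ge\lambda_1^{(4)}(\bm{U})$. Writing $\mu:=\alpha-u_1$, the components of $\bm{W}$ are
\begin{equation*}
\tilde D=\mu D,\qquad \tilde m_1=\mu m_1-p,\qquad \tilde m_2=\mu m_2,\qquad \tilde E=\mu E-pu_1.
\end{equation*}
A short squaring argument based on $c_s^2|\bm{u}|^2<1$ yields $\lambda_1^{(4)}>u_1$ and hence $\mu>0$, so $\tilde D>0$.

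For the remaining requirement $\tilde E>\sqrt{\tilde D^2+|\tilde{\bm{m}}|^2}$, I would study the quadratic
\begin{equation*}
\phi(\mu):=\tilde E^2-\tilde D^2-|\tilde{\bm{m}}|^2=(E^2-D^2-|\bm{m}|^2)\mu^2+2p^2u_1\mu-p^2(1-u_1^2),
\end{equation*}
whose compact form relies on the identity $Eu_1-m_1=-pu_1$ (immediate from $m_1=Dh\gamma u_1$ and $E+p=Dh\gamma$). Because $\bm{U}\in\mathcal{G}$ forces $E^2-D^2-|\bm{m}|^2>0$ and the constant term $-p^2(1-u_1^2)$ is negative, $\phi$ has exactly one negative root $\mu_-^*$ and one positive root $\mu_+^*$. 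The crucial step is the algebraic identification $u_1+\mu_+^*=\lambda_1^{(4)}(\bm{U})$ and $u_1+\mu_-^*=\lambda_1^{(1)}(\bm{U})$. Once this is established, $\alpha\ge\lambda_1^{(4)}$ gives $\mu\ge\mu_+^*$ and therefore $\phi(\mu)\ge 0$; combined with $\tilde E>0$ (deduced from $\mu>0$ together with $\tilde E^2\ge\tilde D^2+|\tilde{\bm{m}}|^2>0$) this yields $\bm{W}\in\mathcal{G}$. The mirror calculation for $-\beta\bm{U}+\bm{F}_1(\bm{U})$ uses the negative root $\mu_-^*$ and the hypothesis $\beta\le\lambda_1^{(1)}$.

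The principal obstacle is the root--eigenvalue matching. It reduces to a direct but tedious algebraic manipulation: rewrite $E^2-D^2-|\bm{m}|^2$ in primitive variables through $E+p=\rho h\gamma^2$ and $|\bm{m}|=\rho h\gamma^2|\bm{u}|$, insert into the quadratic formula for $\mu_+^*$, and verify that the result reproduces the expression for $\lambda_1^{(4)}$ stated in the introduction after invoking $c_s^2=\Gamma p/(\rho h)$ from the ideal-gas EOS. It is the Lorentz factor in the radical of $\lambda_1^{(4)}$ that makes the bookkeeping heavier than in the non-relativistic analogue, but no new idea is required.
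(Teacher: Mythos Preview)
Your treatment of (i) and (ii) matches the paper's. For (iii) the overall architecture---reduce to a quadratic in $\mu=\alpha-u_1$ and locate $\lambda_1^{(4)}$ relative to its positive root---is sound, but the ``crucial step'' you name is false: the identity $u_1+\mu_+^*=\lambda_1^{(4)}(\bm{U})$ does \emph{not} hold, so the ``tedious algebraic manipulation'' you anticipate would fail rather than close. A quick check at $\bm{u}=0$ already shows this: there $\lambda_1^{(4)}=c_s$ while $\mu_+^*=p/\sqrt{E^2-D^2}$, and a short computation using $\rho h=\rho+\Gamma p/(\Gamma-1)$ together with $c_s^2=\Gamma p/(\rho h)$ gives $E^2-D^2>p^2/c_s^2$ strictly for every $\Gamma\in(1,2]$, hence $\mu_+^*<c_s$. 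In general the roots of $\phi$ sit strictly inside $(\lambda_1^{(1)},\lambda_1^{(4)})$.

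What is true, and sufficient for your argument, is only the \emph{inequality} $\lambda_1^{(4)}\ge u_1+\mu_+^*$, i.e.\ $\phi(\lambda_1^{(4)}-u_1)\ge 0$. This is precisely what the paper proves: writing the quadratic as $p^2 f(\alpha)$, it checks that $f$ is increasing on $[\lambda_1^{(4)},1)$ and then evaluates $f(\lambda_1^{(4)})$ explicitly, obtaining a positive constant times a perfect square. The paper also establishes $\tilde E>0$ by a separate direct chain of inequalities rather than by your continuity shortcut; your route can be made rigorous (since $\tilde E$ is linear in $\mu$ with positive slope $E$ and cannot vanish on $[\mu_+^*,\infty)$ because $\tilde E^2\ge\tilde D^2>0$ there), but as written ``$\mu>0$ together with $\tilde E^2\ge\tilde D^2+|\tilde{\bm m}|^2$'' does not by itself determine the sign.
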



The study of numerical methods for the RHDs may date back to
the finite difference code via artificial viscosity  for the spherically symmetric general RHD equations in the Lagrangian coordinate \cite{may1,may2} and  for multi-dimensional RHD equations in the Eulerian coordinate \cite{wilson}.
Since 1990s, the numerical study of the RHD began to attract considerable attention, see the early review articles \cite{marti2,marti2015,font},
and
various modern shock-capturing methods with an exact or approximate Riemann solver have been developed for the RHD equations.
Some examples are   the two-shock Riemann solver
\cite{colella}, the Roe Riemann solver \cite{roe}, the HLL Riemann solver \cite{harten}
and the HLLC Riemann solver \cite{toro} and so on.
Some other higher-order accurate methods have also been well studied in the literature, e.g. the ENO (essentially non-oscillatory) and weighted ENO   methods \cite{dolezal,zanna,tchekhovskoy},
the discontinuous Galerkin  method \cite{RezzollaDG2011,zhao,ZhaoTang-CiCP2017,ZhaoTang-JCP2017}, the adaptive moving mesh methods \cite{he1,he2,Duan-Tang2020RHD2},
and the direct Eulerian GRP schemes \cite{yang2011direct,yang2012direct,wu2014,wu2016,Yuan-Tang2020}.
Most of the above mentioned methods are built on
the 1D Riemann solver, which is used to solve the local Riemann problem
on the cell interface  by picking up flow variations that are orthogonal
to the interface of the mesh and then give the exact or approximate Riemann solution. 
For the multi-dimensional problems,
there are still confronted with
enormous risks that the 1D Riemann solvers may lose their computational
efficacy and efficiency to some content, because 
some flow features propagating transverse to the mesh boundary
might be discarded, 
see \cite{vanLeer1993} for more details.
Therefore, it is necessary to capture much more flow features and then incorporate genuinely
multidimensional (physical) information into numerical methods.

In the early 1990s, owing to a shift from the finite-volume approach to the flctuation approach, the state of the art in genuinely multi-dimensional upwind differencing has made dramatic advances.
A early review of multidimensional upwinding may be found in \cite{vanLeer1993}.
For the linearized Euler equations, a genuinely multidimensional
first-order finite volume scheme
was   constructed by computing the exact solution of the Riemann problem
for a linear hyperbolic equation obtained from the linearized Euler equation was studied \cite{abgrall}.
Up to now, there have been some further developments on the
multidimensional Riemann solvers and corresponding numerical schemes, including
the multidimensional HLL schemes for solving Euler equations on unstructured triangular meshes
\cite{capdeville1,capdeville2}, the genuinely multidimensional HLL-type scheme
with convective pressure flux split Riemann solver \cite{mandal}, the multidimensional HLLE schemes
for gas dynamics \cite{wendroff,barsara},
the multidimensional HLLC schemes \cite{barsara2,barsara3} for hydrodynamics
and magnetohydrodynamics, and the genuinely two-dimensional
scheme for compressible flows in curvilinear coordinates \cite{qu} etc.
It is worth mentioning that all the aforementioned multidimensional schemes
are only for the non-relativistic fluid flows.
For the 2D special RHDs, a genuinely multidimensional scheme, the finite volume local evolution Galerkin method, was developed in \cite{wu2014b}.
%

On the other hand, based on the properties of $\mathcal{G}$, some PCP schemes were well developed for the special RHDs. They are the high-order accurate PCP finite difference WENO schemes,  discontinuous Galerkin (DG) methods and   Lagrangian  finite volume schemes proposed
in \cite{wu2015,wu2017,qin,wu2017a,ling}.
Such works were successfully extended to the special relativistic magnetohydrodynamics (RMHD) in \cite{wu2017m3as,wu2018zamp},
where the importance of divergence-free fields in achieving PCP methods is shown.
Recently, the entropy-stable schemes were also developed for the special RHD or RMHD
equations \cite{Duan-Tang2020RHD,Duan-Tang2020RMHD,Duan-Tang2020RHD2}.

This paper  will develop the  PCP 
genuinely multidimensional   finite volume scheme for the RHD equations \eqref{eq27}.
It is organized as follows. Section \ref{multi-hlle} derives
the 2D HLL Riemann solver for \eqref{eq27} and
studies the PCP property of its intermediate state.
Section \ref{scheme} presents 
 the PCP genuinely multidimensional  HLL scheme.
Section \ref{num}
conducts several numerical experiments to demonstrate the accuracy and good performance
of the present scheme. Section \ref{con} concludes the paper with some remarks.

\section{2D HLL Riemann solver}
\label{multi-hlle}
This section develops the genuinely multidimensional HLL Riemann solver for the 2D special RHD equations \eqref{eq27}  and the EOS
\eqref{eq15} on Cartesian meshes following Balsara's strategy \cite{barsara} and studies its PCP property.

\begin{figure}[h]
\centering
\vspace{-2ex}
\includegraphics[width=3.6in]{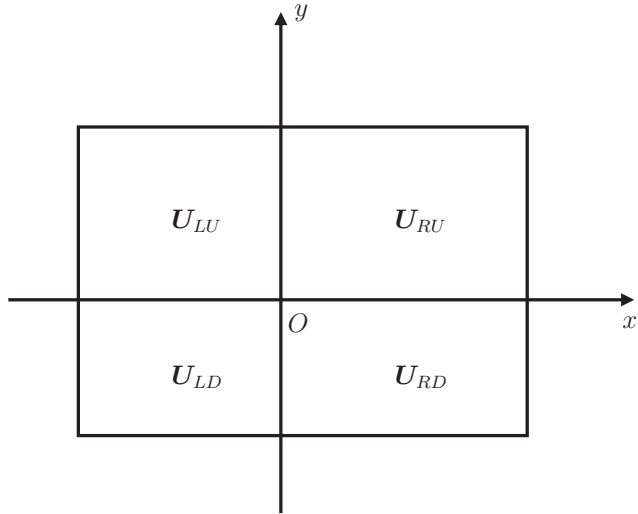}
\caption{The  initial data of
 the 2D Riemann problem at the origin $O$.}
\label{fig1}
\end{figure}

Consider the 2D Riemann problem of \eqref{eq27}
with the initial data as displayed in Fig. \ref{fig1}, where 
four constant states, $\bm{U}_{RU}$ (right-up), $\bm{U}_{LU}$ (left-up),
$\bm{U}_{LD}$ (left-down) and $\bm{U}_{RD}$ (right-down) are specified
in the first, second, third and fourth quadrants,  respectively, and $O$ is the coordinate origin.
Denote the largest left-, right-, up- and down-moving  speeds of
the elementary waves emerging from the initial discontinuities by $S_L$, $S_R$, $S_U$, $S_D$, respectively.
For instance, $S_L$ and $S_R$ are given obtained as follows:
calculate the largest left- and right-moving wave speeds in the 1D HLL solvers \cite{toro} for the two $x$-directional 1D
Riemann problems, denoted by $RP{\{\bm{U}_{LU},\bm{U}_{RU}\}}$
and $RP{\{\bm{U}_{LD},\bm{U}_{RD}\}}$,
and then 
minimize
two left-moving speeds of those 1D HLL solvers and maximize corresponding two right-moving speeds to give respectively $S_L$ and $S_R$.
Similarly, $S_D$ and $S_U$ are obtained by considering
two $y$-directional 1D Riemann problems denoted by $RP{\{\bm{U}_{LU},\bm{U}_{LD}\}}$
and $RP{\{\bm{U}_{RU},\bm{U}_{RD}\}}$.

In the following, the symbols  $(\bm{F}_1, \bm{F}_2)$, $(u_1,u_2)$, and $(x_1,x_2)$  will be replaced with $(\bm{F}, \bm{G})$, $(u,v)$, and $(x,y)$, respectively, and we only discuss the  case of that $S_L<0<S_R$ and $S_D<0<S_U$, because in the case of that $S_L$ and $S_R$ (or $S_D$ and $S_U$) have the same sign, our genuinely multidimensional scheme will only call the 1D Riemann solver.
Similar to the case of the 1D HLL Riemann solver,
 we have to derive the intermediate state
$\bm{U}^{\ast}$ in the approximate solution of
the above Riemann problem and corresponding fluxes $\bm{F}^\ast$ and $\bm{G}^\ast$.
For any given time $T>0$, choose
a three-dimensional cuboid $\mathbb V_{LRDU}(0,T)$ in the $(x,y,t)$ space as follows:
the top and bottom of the cuboid  are rectangles with
four vertices
$$
(TS_L,TS_D,0),(TS_R,TS_D,0),(TS_L,TS_U,0),(TS_R,TS_U,0),
$$
and
$$
(TS_L,TS_D,T),(TS_R,TS_D,T),(TS_L,TS_U,T),(TS_R,TS_U,T),
$$respectively.
Integrating \eqref{eq27} over such cuboid gives
\begin{align}\nonumber
\bm{U}^{\ast}\mathscr{A}&-\int_{TS_L}^{TS_R}\int_{TS_D}^{TS_U}
\bm{U}(x,y,0)dydx
\\ \nonumber
&+\int_{0}^{T}\int_{TS_D}^{TS_U}\bm{F}(\bm{U}(TS_R,y,0))dydt
-\int_{0}^{T}\int_{TS_D}^{TS_U}\bm{F}(\bm{U}(TS_L,y,0))dydt
\\
&+\int_{0}^{T}\int_{TS_L}^{TS_R}\bm{G}(\bm{U}(x,TS_U,0))dxdt
-\int_{0}^{T}\int_{TS_L}^{TS_R}\bm{G}(\bm{U}(x,TS_D,0))dxdt=0,
\label{eq3}
\end{align}
where
$$\bm{U}^{\ast}:=\frac{1}{\mathscr{A}}\int_{TS_D}^{TS_U}\int_{TS_L}^{TS_R}
\bm{U}(x,y,T)dxdy,\ \
\mathscr{A}:=T^2(S_R-S_L)(S_U-S_D).
$$
From \eqref{eq3}, one has
\begin{align}\nonumber
\bm{U}^{\ast}:&=\frac{1}{\mathscr{A}}\int_{TS_D}^{TS_U}\int_{TS_L}^{TS_R}
\bm{U}(x,y,T)dxdy
=\frac{S_RS_U\bm{U}_{RU}+S_LS_D\bm{U}_{LD}-S_RS_D\bm{U}_{RD}
-S_LS_U\bm{U}_{LU}}{(S_R-S_L)(S_U-S_D)}
\\ 
& -\frac{S_U(\bm{F}_{RU}-\bm{F}_{LU})-S_D(\bm{F}_{RD}-\bm{F}_{LD})}{(S_R-S_L)(S_U-S_D)}
-\frac{S_R(\bm{G}_{RU}-\bm{G}_{RD})-S_L(\bm{G}_{LU}-\bm{G}_{LD})}{(S_R-S_L)(S_U-S_D)}.
\label{eq43}
\end{align}
It is shown that the calculation of the  intermediate  state $\bm{U}^{\ast}$ requires four statuses
$\bm{U}_{LD}$, $\bm{U}_{LU}$, $\bm{U}_{RD}$, and $\bm{U}_{RU}$,
in other words, $\bm{U}^{\ast}$ contains the genuinely
multidimensional information.
In the special case of that
\begin{equation*}
\bm{U}_{LD}=\bm{U}_{LU},~~~\bm{U}_{RD}=\bm{U}_{RU},
\end{equation*}
 one has
\begin{equation*}
\bm{F}_{LD}=\bm{F}_{LU},~~~\bm{F}_{RD}=\bm{F}_{RU},~~~\bm{G}_{LD}=\bm{G}_{LU},
~~~\bm{G}_{RD}=\bm{G}_{RU},
\end{equation*}
and  
\begin{equation}\label{eq38}
\bm{U}^{\ast}=\frac{S_R\bm{U}_{RD}-S_L\bm{U}_{LU}+\bm{F}_{LD}-\bm{F}_{RD}}{S_R-S_L},
\end{equation}
which is the same as the  intermediate state in the 1D HLL Riemann solver in \cite{toro}.

Let us turn to obtain the resolved fluxes $\bm{F}^{\ast}$ and $\bm{G}^{\ast}$ for the multidimensional
Riemann solver
for the case of
$S_L<0<S_R$ and $S_D<0<S_U$.
Integrating respectively the system
\eqref{eq27} over the left portion and top portion
(or the right and bottom portions)
of the control
volume $\mathbb V_{LRDU}(0,T)$ yields
\begin{align} \nonumber
\int_{TS_L}^{0}\int_{TS_D}^{TS_U}\bm{U}(x,y,T)dydx&=\int_{TS_L}^{0}
\int_{TS_D}^{TS_U}\bm{U}(x,y,0)dydx
\\  \nonumber
~~~~~-\int_{0}^{T}&\int_{TS_D}^{TS_U}\big(\bm{F}(\bm{U}(0,y,t))-\bm{F}(\bm{U}(TS_L,y,t))\big)dydt\\
-\int_{0}^{T}&\int_{TS_L}^{0}\big(\bm{G}(\bm{U}(x,TS_U,t))-\bm{G}(\bm{U}(x,TS_D,t))\big)dxdt,
\label{eq11}
\\ \nonumber
\int_{TS_L}^{TS_R}\int_{0}^{TS_U}\bm{U}(x,y,T)dydx&=
\int_{TS_L}^{TS_R}\int_{0}^{TS_U}\bm{U}(x,y,0)dydx
\\ \nonumber
-\int_{0}^{T}\int_{0}^{TS_U}&\big(\bm{F}(\bm{U}(TS_R,y,t))
-\bm{F}(\bm{U}(TS_L,y,t))\big)dydt\\
-\int_{0}^{T}\int_{TS_L}^{TS_R}&\big(\bm{G}(\bm{U}(x,TS_U,t))
-\bm{G}(\bm{U}(x,0,t))\big)dxdt.
\label{eq12}
\end{align}
\begin{figure}[h]
\centering
\includegraphics[width=5.6in,height=2.0in]{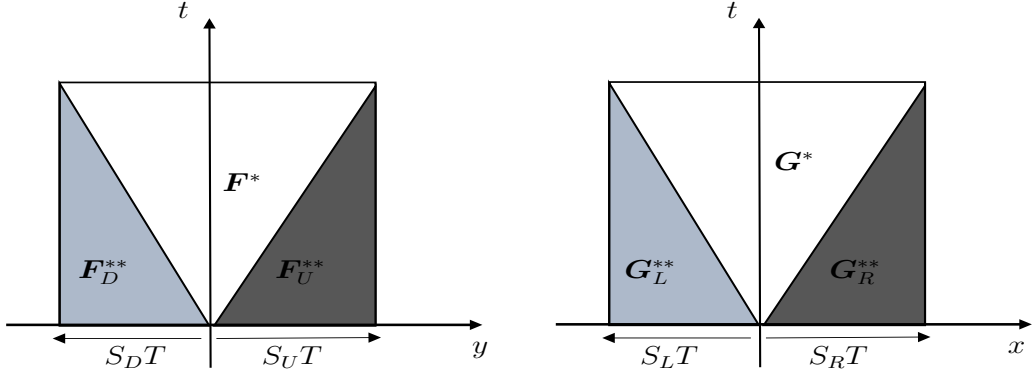}
\caption{Fluxes along the faces $x=0$ (left) and $y=0$ (right) respectively
consisting of several different portions.}
\label{facefluxes}
\end{figure}
 As shown in the Figure \ref{facefluxes},
the fluxes along the faces $x=0$ (left) and $y=0$ (right) respectively
are consisting of several different portions,
 so that
 the area integrals in \eqref{eq12}
on the $x=0$ and $y=0$ faces respectively read as
\begin{align}
\int_{0}^{T}\int_{TS_D}^{TS_U}\bm{F}(\bm{U}(0,y,t))&=\frac{T^2}{2}\bigg(S_U\bm{F}_U^{\ast\ast}
-S_D\bm{F}_D^{\ast\ast}+(S_U-S_D)\bm{F}^\ast\bigg),\label{add14}\\
\int_{0}^{T}\int_{TS_L}^{TS_R}\bm{G}(\bm{U}(x,0,t))&=\frac{T^2}{2}\bigg(S_R\bm{G}_R^{\ast\ast}
-S_L\bm{G}_L^{\ast\ast}+(S_G-S_L)\bm{G}^\ast\bigg),\label{add15}
\end{align}
where $\bm{F}_U^{\ast\ast},\bm{F}_D^{\ast\ast},\bm{G}_R^{\ast\ast}$,
and $\bm{G}_L^{\ast\ast}$ are corresponding 1D HLL fluxes in the
1D HLL Riemann solver and have the specific formulations of
\vspace{-1ex}\begin{align}
\bm{F}_U^{\ast\ast}&=\frac{1}{S_R-S_L}\bigg(S_R\bm{F}_{LU}-S_L\bm{F}_{RU}
+S_LS_R(\bm{U}_{RU}-\bm{U}_{LU})\bigg),\label{add16}\\
\bm{F}_D^{\ast\ast}&=\frac{1}{S_R-S_L}\bigg(S_R\bm{F}_{LD}-S_L\bm{F}_{RD}
+S_LS_R(\bm{U}_{RD}-\bm{U}_{LD})\bigg),\label{add17}\\
\bm{G}_R^{\ast\ast}&=\frac{1}{S_U-S_D}\bigg(S_U\bm{G}_{RD}-S_D\bm{G}_{RU}
+S_DS_U(\bm{U}_{RU}-\bm{U}_{RD})\bigg),\label{add18}\\
\bm{G}_L^{\ast\ast}&=\frac{1}{S_U-S_D}\bigg(S_U\bm{G}_{LD}-S_D\bm{G}_{LU}
+S_DS_U(\bm{U}_{LU}-\bm{U}_{LD})\bigg).\label{add19}
\end{align}
Combining $\eqref{eq43}$ with the relations
in \eqref{eq11}-\eqref{add19} yields
\vspace{-1ex}
\begin{align}\label{eq34}
\bm{F}^\ast&=\frac{1}{S_U-S_D}\bigg(S_U\bm{F}_U^{\ast\ast}-S_D\bm{F}_D^{\ast\ast}-
\frac{2S_LS_R}{S_R-S_L}(\bm{G}_{RU}-\bm{G}_{RD}-\bm{G}_{LU}+\bm{G}_{LD})\bigg),
\\
\label{eq35}
\bm{G}^\ast&=\frac{1}{S_R-S_L}\bigg(S_R\bm{G}_R^{\ast\ast}-S_L\bm{G}_L^{\ast\ast}-
\frac{2S_DS_U}{S_U-S_D}(\bm{F}_{RU}-\bm{F}_{RD}-\bm{F}_{LU}+\bm{F}_{LD})\bigg).
\end{align}

For all other cases with  the certain signs (either positive or negative) of $S_L,S_R,S_D,S_U$, different from the case of $S_L<0<S_R$ and $S_D<0<S_U$,
we can similarly evaluate the above area integrals on the $x=0$ and $y=0$ faces, and  get the  fluxes $\bm{F}^\ast$ and $\bm{G}^\ast$ in the multidimensional Riemann solver
by \eqref{eq11} and \eqref{eq12}.
 The multidimensional fluxes  can be generalized for all situations
by setting \cite{toro}
\begin{equation}\label{eq29}
S_L^{-}=\min(S_L,0),~~S_R^{+}=\max(S_R,0),~~S_D^{-}=\min(S_D,0),~~S_U^{+}=\max(S_U,0).
\end{equation}
If denoting the 2D HLL fluxes as $\bm{F}^{\text{2D-HLL}}$
and $\bm{G}^{\text{2D-HLL}}$ in $x$- and $y$-directions respectively,
then one has their explicit forms
\begin{align}\label{eq41}
\begin{aligned}
&\bm{F}^{\text{2D-HLL}}(\bm{U}_{LD},\bm{U}_{LU},\bm{U}_{RD},\bm{U}_{RU})\\
=&\frac{1}{S_U^{+}-S_D^{-}}\bigg(S_U^{+}\bm{F}_U^{\ast\ast}-S_D^{-}\bm{F}_D^{\ast\ast}-
\frac{2S_L^{-}S_R^{+}}{S_R^{+}-S_L^{-}}(\bm{G}_{RU}-\bm{G}_{RD}-\bm{G}_{LU}+\bm{G}_{LD})\bigg),
\end{aligned}
\\
\label{eq42}
\begin{aligned}
&\bm{G}^{\text{2D-HLL}}(\bm{U}_{LD},\bm{U}_{LU},\bm{U}_{RD},\bm{U}_{RU})\\
=&\frac{1}{S_R^{+}-S_L^{-}}\bigg(S_R^{+}\bm{G}_R^{\ast\ast}-S_L^{-}\bm{G}_L^{\ast\ast}-
\frac{2S_D^{-}S_U^{+}}{S_U^{+}-S_D^{-}}(\bm{F}_{RU}-\bm{F}_{RD}-\bm{F}_{LU}+\bm{F}_{LD})\bigg).
\end{aligned}
\end{align}

Now we begin to study the PCP property of the multidimensional
HLL Riemann solver, which means that the resolved state $\bm{U}^{\ast}$
in the multidimensional Riemann solver is admissible.
Only the case  of $S_L<0<S_R,S_D<0<S_U$ needs to be discussed here,
since the other situations (except for the non-trivial case of $S_L<0<S_R,S_D<0<S_U$)
produce the 1D resolved state which can be easily proved to be PCP according
to \cite{ling}.

Before that, we first give the specific definition of
wave speeds $S_L,S_R,S_D,S_U$.
There exist several different ways to define the wave speeds  in the 1D HLL-type Riemann solvers, see e.g.   \cite{batten,einfeldt,davis}.
If letting $\lambda_A^{(1)}(\bm{U}_{LD})$ and
$\lambda_A^{(4)}(\bm{U}_{LD})$ denote the smallest and largest eigenvalues of Jacobian matrix
$\partial\bm{F}/\partial\bm{U}(\bm{U}_{LD})$, and making corresponding definitions
for the other states in the $x$- and  $y$-directions similarly,
then the wave speeds $S_L, S_R, S_D$ and $S_U$ are given by
\begin{equation}\label{eq2}
\begin{aligned}
&S_L=\alpha\min\big(\lambda_A^{(1)}(\bm{U}_{LD}), \lambda_A^{(1)}(\bm{U}_{RD}), \lambda_A^{(1)}(\bm{U}_{LU}),
\lambda_A^{(1)}(\bm{U}_{RU})\big),\\
&S_R=\alpha\max\big(\lambda_A^{(4)}(\bm{U}_{LD}), \lambda_A^{(4)}(\bm{U}_{RD}), \lambda_A^{(4)}(\bm{U}_{LU}),
\lambda_A^{(4)}(\bm{U}_{RU})\big),\\
&S_D=\alpha\min\big(\lambda_{B}^{(1)}(\bm{U}_{LD}), \lambda_{B}^{(1)}(\bm{U}_{RD}), \lambda_{B}^{(1)}(\bm{U}_{LU}),
\lambda_{B}^{(1)}(\bm{U}_{RU})\big),\\
&S_U=\alpha\max\big(\lambda_{B}^{(4)}(\bm{U}_{LD}), \lambda_{B}^{(4)}(\bm{U}_{RD}), \lambda_{B}^{(4)}(\bm{U}_{RU}),
\lambda_{B}^{(4)}(\bm{U}_{LU})\big),
\end{aligned}
\end{equation}
where $\alpha\ge1$ is to be determined later.
The PCP property of the multidimensional HLL Riemann solver with \eqref{eq2} can be obtained as follows.
\begin{theorem}\label{thm1}
The intermediate state $\bm{U}^{\ast}$ in \eqref{eq43} obtained from the multidimensional
HLL Riemann solver is admissible, i.e.
\begin{equation*}
D^{\ast}>0,~~E^{\ast}>0,~~(E^{\ast})^2-(D^{\ast})^2-|\bm{m}^{\ast}|^2>0,
\end{equation*}
if the wave speeds $S_L,S_R,S_D,S_U$ are taken as \eqref{eq2} with $\alpha=2$.
\end{theorem}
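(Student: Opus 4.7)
The plan is to rewrite $\Delta\bm{U}^{\ast}$, with $\Delta:=(S_R-S_L)(S_U-S_D)>0$, as a nonnegative linear combination of vectors each lying in $\mathcal{G}$, and then invoke Lemma~\ref{lem2}(i)--(ii) to conclude that $\bm{U}^{\ast}\in\mathcal{G}$, which is exactly the admissibility assertion of the theorem.

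The key building blocks will be the single-corner, half-speed HLL pieces $\tfrac{S_R}{2}\bm{U}_c-\bm{F}(\bm{U}_c)$, $-\tfrac{S_L}{2}\bm{U}_c+\bm{F}(\bm{U}_c)$, $\tfrac{S_U}{2}\bm{U}_c-\bm{G}(\bm{U}_c)$, and $-\tfrac{S_D}{2}\bm{U}_c+\bm{G}(\bm{U}_c)$ for each corner $c\in\{LD,LU,RD,RU\}$. The choice $\alpha=2$ in \eqref{eq2} is made precisely so that, for \emph{every} corner $c$, the inequalities $\tfrac{S_R}{2}\ge\lambda_A^{(4)}(\bm{U}_c)$, $\tfrac{S_L}{2}\le\lambda_A^{(1)}(\bm{U}_c)$, $\tfrac{S_U}{2}\ge\lambda_B^{(4)}(\bm{U}_c)$, $\tfrac{S_D}{2}\le\lambda_B^{(1)}(\bm{U}_c)$ all hold. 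By Lemma~\ref{lem2}(iii) applied in both the $x$- and $y$-directions, each of these building blocks thus belongs to $\mathcal{G}$.

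Next I would group the terms of \eqref{eq43} by corner, writing $\Delta\bm{U}^{\ast}=X_{RU}+X_{LD}+X_{RD}+X_{LU}$, with, e.g., $X_{RU}=S_RS_U\bm{U}_{RU}-S_U\bm{F}_{RU}-S_R\bm{G}_{RU}$ and analogous expressions for the other corners. The crucial algebraic observation is that each $X_c$ is a positive combination of exactly two building blocks; for the RU corner,
\begin{equation*}
X_{RU}=S_U\left(\tfrac{S_R}{2}\bm{U}_{RU}-\bm{F}_{RU}\right)+S_R\left(\tfrac{S_U}{2}\bm{U}_{RU}-\bm{G}_{RU}\right),
\end{equation*}
where the two ``half'' contributions to $\bm{U}_{RU}$ sum exactly to $S_RS_U\bm{U}_{RU}$, which is why the factor $\alpha=2$ (rather than $\alpha=1$) is required. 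For the remaining three corners the positive multipliers are drawn from $\{S_R,-S_L,S_U,-S_D\}$ according to the sign pattern; for example, $X_{LD}=(-S_D)\left(-\tfrac{S_L}{2}\bm{U}_{LD}+\bm{F}_{LD}\right)+(-S_L)\left(-\tfrac{S_D}{2}\bm{U}_{LD}+\bm{G}_{LD}\right)$, and similarly for $X_{RD}$ and $X_{LU}$.

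Summing the four identities expresses $\Delta\bm{U}^{\ast}$ as a nonnegative combination of eight members of $\mathcal{G}$; iterated use of Lemma~\ref{lem2}(ii) puts the sum in $\mathcal{G}$, and a final application of Lemma~\ref{lem2}(i) with $\kappa=1/\Delta>0$ gives $\bm{U}^{\ast}\in\mathcal{G}$, equivalent to $D^{\ast}>0$, $E^{\ast}>0$, and $(E^{\ast})^2-(D^{\ast})^2-|\bm{m}^{\ast}|^2>0$. The main obstacle is purely the bookkeeping: determining for each corner which two of the four building blocks combine, with which positive weights, to reproduce $X_c$; the identity for $X_{RU}$ is easily verified by expansion, and the other three cases follow by mirroring the sign of the corresponding wave speed. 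No genuinely nonlinear argument is needed beyond the structural properties of $\mathcal{G}$ collected in Lemma~\ref{lem2}.
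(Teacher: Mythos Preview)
Your proof is correct and essentially identical to the paper's. The paper groups by corner via $\bm{H}_c:=\bm{U}_c-S_x^{-1}\bm{F}_c-S_y^{-1}\bm{G}_c$ (so that your $X_c$ is just the positive multiple $|S_xS_y|\,\bm{H}_c$), writes $\bm{U}^{\ast}$ as a convex combination of the four $\bm{H}_c$, and then splits each one as $\tfrac12(\bm{U}_c-\tfrac{2}{S_x}\bm{F}_c)+\tfrac12(\bm{U}_c-\tfrac{2}{S_y}\bm{G}_c)$, which is precisely your two-building-block decomposition of $X_c$ up to a positive rescaling.
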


\begin{proof}
Let us assume $S_L<0<S_R,S_D<0<S_U$.
Rewrite $\bm{U}^{\ast}$
as
\begin{equation*}
\bm{U}^{\ast}=\frac{1}{\mathscr{B}}\bigg(S_LS_D\bm{H}_{LD}
-S_RS_D\bm{H}_{RD}-S_LS_U\bm{H}_{LU}+S_RS_U\bm{H}_{RU}\bigg),
\end{equation*}
with $\mathscr{B}=(S_R-S_L)(S_U-S_D)$ and
\begin{equation*}
\begin{aligned}
&\bm{H}_{LD}=\bm{U}_{LD}-\frac{1}{S_L}\bm{F}_{LD}-\frac{1}{S_D}\bm{G}_{LD},~~
\bm{H}_{RD}=\bm{U}_{RD}-\frac{1}{S_R}\bm{F}_{RD}-\frac{1}{S_D}\bm{G}_{RD},\\
&\bm{H}_{LU}=\bm{U}_{LU}-\frac{1}{S_L}\bm{F}_{LU}-\frac{1}{S_U}\bm{G}_{LU},~~
\bm{H}_{RU}=\bm{U}_{RU}-\frac{1}{S_R}\bm{F}_{RU}-\frac{1}{S_U}\bm{G}_{RU}.
\end{aligned}
\end{equation*}
Because $\bm{U}^\ast$ is a convex combination
of $\bm{H}_{LD},\bm{H}_{RD}$, $\bm{H}_{LU},\bm{H}_{RU}$,
it is sufficient to have those
$\bm{H}$-terms to be in the admissible set $\mathcal{G}$.
As an example, consider the term $\bm{H}_{LD}$,
which can be decomposed  into two parts as follows
\begin{equation*}
\bm{H}_{LD}=\bm{U}_{LD}-\frac{1}{S_L}\bm{F}_{LD}-\frac{1}{S_D}\bm{G}_{LD}
=\frac{1}{2}\bigg(\bm{U}_{LD}-\frac{2}{S_L}\bm{F}_{LD}\bigg)
+\frac{1}{2}\bigg(\bm{U}_{LD}-\frac{2}{S_D}\bm{G}_{LD}\bigg).
\end{equation*}
The properties (\romannumeral2) and (\romannumeral3)
in Lemma \ref{lem2} show
the admissibility of
$\bm{U}_{LD}-\frac{2}{S_L}\bm{F}_{LD}$ and $\bm{U}_{LD}-\frac{2}{S_D}\bm{G}_{LD}$ so is $\bm{H}_{LD}$.
Using the similar way can shows that   other $\bm{H}$-terms are also admissible.
The proof is completed.
\end{proof}

\section{Numerical scheme}\label{scheme}
This section presents the first-order accurate PCP finite volume schemes with multidimensional HLL Riemann solver
for the special RHD equations \eqref{eq27}.

Assume that she computational domain is divided into $N\times M$ rectangular cells:
$I_{i,j} =(x_{i-\frac{1}{2}},x_{i+\frac{1}{2}})\times(y_{j-\frac{1}{2}},y_{j+\frac{1}{2}})$ with
the step-sizes $\Delta x_i=x_{i+\frac{1}{2}}-x_{i-\frac{1}{2}},
\Delta y_j=y_{j+\frac{1}{2}}-y_{j-\frac{1}{2}}$
for $i=1,\cdots,N; j=1,\cdots,M$,
and that the time interval $\{t>0\}$ is discretized as:
$t_{n+1}=t_n+\Delta t^n$, $n=0,1,2,\cdots$,
where $\Delta t^n$ is the time step size at $t=t_n$, determined
by the CFL type condition
\begin{equation}\label{eq39}
\Delta t^n\le\sigma\min\limits_{i,j}\left\{\frac{\Delta x_i}
{\max\big(|\lambda_A^{(1)}(\overline{\bm{U}}_{ij}^n)|,
|\lambda_A^{(4)}(\overline{\bm{U}}_{ij}^n)|\big)},\frac{\Delta y_j}
{\max\big(|\lambda_B^{(1)}(\overline{\bm{U}}_{ij}^n)|,|\lambda_B^{(4)}(\overline{\bm{U}}_{ij}^n)|\big)}\right\},
\end{equation}
here the CFL number $\sigma\le1$, and
  $\overline{\bm{U}}_{ij}^{n}$ is the (approximate)
cell average value of $\bm{U}$ at $t_n$  over the cell $I_{i,j}$.

For the RHD system \eqref{eq27}, the finite volume scheme with the Euler forward time discretization
can be formulated as follows
\begin{equation}\label{eq20}
\overline{\bm{U}}_{ij}^{n+1}=\overline{\bm{U}}_{ij}^{n}-\frac{\Delta t^n}{\Delta x_i}
\big(\widehat{\bm{F}}_{i+\frac{1}{2},j}-\widehat{\bm{F}}_{i-\frac{1}{2},j}\big)-\frac{\Delta t^n}{\Delta y_j}
\big(\widehat{\bm{G}}_{i,j+\frac{1}{2}}-\widehat{\bm{G}}_{i,j-\frac{1}{2}}\big),
\end{equation}
where 
$\widehat{\bm{F}}$ and $\widehat{\bm{G}}$ are the numerical fluxes evaluated at
 zone faces corresponding to the $x$- and $y$-directions, respectively.
For the  genuinely multidimensional scheme, by means of Figure \ref{facefluxes}, the fluxes $\widehat{\bm{F}}$ and $\widehat{\bm{G}}$   should be contributed by the 1D Riemann solver at the center of the zone face
and the 2D Riemann solver at the corners of that face.
In our
2D scheme, the flux $\widehat{\bm{F}}_{i+\frac{1}{2},j}$ consists
of three parts: $\bm{F}_{i+\frac{1}{2},j}^{\text{1D-HLL}}$ computed from the 1D HLL Riemann
solver and $\bm{F}_{i+\frac{1}{2},j\pm\frac{1}{2}}^{\text{2D-HLL}}$ computed from the 2D
HLL Riemann solver, where
\begin{equation}\label{eq44}
\begin{aligned}
&\bm{F}^{\text{1D-HLL}}_{i+\frac{1}{2},j}=\bm{F}^{\text{1D-HLL}}(\bm{U}_{i+\frac{1}{2},j}^{L},\bm{U}_{i+\frac{1}{2},j}^{R}),\\
&\bm{F}^{\text{2D-HLL}}_{i+\frac{1}{2},j\pm\frac{1}{2}}
=\bm{F}^{\text{2D-HLL}}\big(\bm{U}^{LD}_{i+\frac{1}{2},j\pm\frac{1}{2}},
\bm{U}^{RD}_{i+\frac{1}{2},j\pm\frac{1}{2}},\bm{U}^{LU}_{i+\frac{1}{2},j\pm\frac{1}{2}},
\bm{U}^{RU}_{i+\frac{1}{2},j\pm\frac{1}{2}}\big),
\end{aligned}
\end{equation}
with $\bm{U}_{i+\frac{1}{2},j}^{L},\bm{U}_{i+\frac{1}{2},j}^{R}$ being the left and right
limited approximations of $\bm{U}$ at the center of the edge $x=x_{i+\frac{1}{2}}$, and
$\bm{U}^{LD}_{i+\frac{1}{2},j\pm\frac{1}{2}},
\bm{U}^{RD}_{i+\frac{1}{2},j\pm\frac{1}{2}},\bm{U}^{LU}_{i+\frac{1}{2},j\pm\frac{1}{2}},
\bm{U}^{RU}_{i+\frac{1}{2},j\pm\frac{1}{2}}$ being the left-down, right-down, left-up and right-up
limited approximations of $\bm{U}$ at the node $(x_{i+\frac{1}{2}},y_{j\pm\frac{1}{2}})$.
Those limited approximations can be obtained
by using the initial reconstruction technique and $\{\overline{\bm{U}}_{ij}^{n}\}$ respectively.
For example, for the first-order accurate scheme, they can be calculated from the reconstructed
piecewise constant function
\begin{equation}\label{add21}
\begin{aligned}
&\bm{U}_{i+\frac{1}{2},j}^{L}=\overline{\bm{U}}_{ij},
~~~~~~~~\bm{U}_{i+\frac{1}{2},j}^{R}=\overline{\bm{U}}_{i+1,j},
~~~~~\bm{U}^{LD}_{i+\frac{1}{2},j+\frac{1}{2}}=\overline{\bm{U}}_{ij},\\
&\bm{U}^{RD}_{i+\frac{1}{2},j+\frac{1}{2}}=\overline{\bm{U}}_{i+1,j},
~\bm{U}^{LU}_{i+\frac{1}{2},j+\frac{1}{2}}=\overline{\bm{U}}_{i,j+1},
~~\bm{U}^{RU}_{i+\frac{1}{2},j+\frac{1}{2}}=\overline{\bm{U}}_{i+1,j+1}.
\end{aligned}
\end{equation}
%
In summary, our final numerical fluxes $\widehat{\bm{F}}$ and $\widehat{\bm{G}}$ are given as follows
\begin{align}
\widehat{\bm{F}}_{i+\frac{1}{2},j}&=\frac{\Delta t^n}{2\Delta y_j}
\big(S_{U,i+\frac{1}{2},j-\frac{1}{2}}^{+}\bm{F}^{\text{2D-HLL}}_{i+\frac{1}{2},j-\frac{1}{2}}
-S_{D,i+\frac{1}{2},j+\frac{1}{2}}^{-}\bm{F}^{\text{2D-HLL}}_{i+\frac{1}{2},j+\frac{1}{2}}\big)\notag\\
&~~~+\left(1-\frac{\Delta t^n}{2\Delta y_j}
(S_{U,i+\frac{1}{2},j-\frac{1}{2}}^{+}-S_{D,i+\frac{1}{2},j+\frac{1}{2}}^{-})\right)
\bm{F}_{i+\frac{1}{2},j}^{\text{1D-HLL}},\label{add23}\\
\widehat{\bm{G}}_{i,j+\frac{1}{2}}&=\frac{\Delta t^n}{2\Delta x_i}
\big(S_{R,i-\frac{1}{2},j+\frac{1}{2}}^{+}\bm{G}^{\text{2D-HLL}}_{i-\frac{1}{2},j+\frac{1}{2}}
-S_{L,i+\frac{1}{2},j+\frac{1}{2}}^{-}\bm{G}^{\text{2D-HLL}}_{i+\frac{1}{2},j+\frac{1}{2}}\big)\notag\\
&~~~+\left(1-\frac{\Delta t^n}{2\Delta x_i}
(S_{R,i-\frac{1}{2},j+\frac{1}{2}}^{+}-S_{L,i+\frac{1}{2},j+\frac{1}{2}}^{-})\right)
\bm{G}_{i,j+\frac{1}{2}}^{\text{1D-HLL}},\label{add24}
\end{align}
where $S_L^{-},S_R^{+},S_D^{-},S_U^{+}$ are defined in \eqref{eq29}.

\begin{figure}[H]
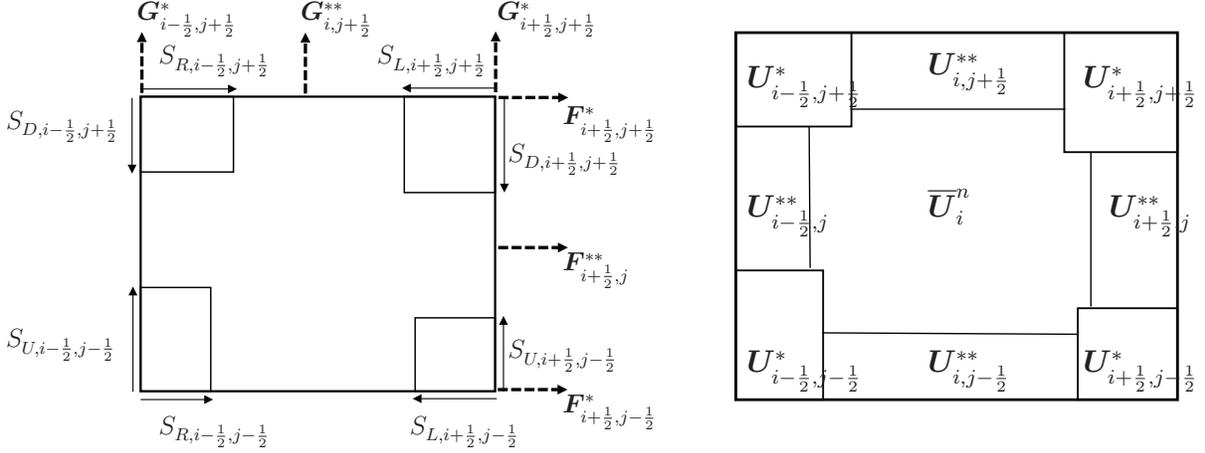

\begin{minipage}{0.55\textwidth}
\centering
\includegraphics[width=3.6in]{figures/interfluxes.pdf}
\end{minipage}
\begin{minipage}{0.45\textwidth}
\centering
\includegraphics[width=2.6in]{figures/interstates.pdf}
\end{minipage}
\caption{Illustration for the numerical fluxes (left) and approximate solution (right) for the numerical scheme
\eqref{eq20}.}
\label{inter}
\end{figure}

Now, let us discuss the PCP property of the scheme \eqref{eq20} with the numerical
fluxes  \eqref{add23} and \eqref{add24}.
Following the design of the 1D and 2D HLL Riemann solvers,
under the CFL condition \eqref{eq39} with $\sigma\le\frac{1}{2}$, the scheme
\eqref{eq20} can be written as an exact integration of those two approximate Riemann solvers
over the cell $I_{i,j}$.
For the the non-trivial case: $S_L<0<S_R,S_D<0<S_U$, the updated solution
$\overline{\bm{U}}_{ij}^{n+1}$ can be reformulated as a convex combination of nine terms:
$\overline{\bm{U}}_{ij}^n,\bm{U}_{i-\frac{1}{2},j-\frac{1}{2}}^{\ast},\bm{U}_{i+\frac{1}{2},j-\frac{1}{2}}^{\ast},
\bm{U}_{i-\frac{1}{2},j+\frac{1}{2}}^{\ast},\bm{U}_{i-\frac{1}{2},j+\frac{1}{2}}^{\ast}$ and
$\bm{U}_{i-\frac{1}{2},j}^{\ast\ast},\bm{U}_{i+\frac{1}{2},j}^{\ast\ast},
\bm{U}_{i,j-\frac{1}{2}}^{\ast\ast},\bm{U}_{i,j+\frac{1}{2}}^{\ast\ast}$, which is illustrated in
Figure \ref{inter}. The terms with superscript
``$\ast$'' are the approximate Riemann solutions obtained from the multidimensional HLL
Riemann solver and
the terms with superscript ``$\ast\ast$'' are the approximate Riemann solutions obtained from the
1D HLL Riemann solver.
Each term in the convex combination is admissible, see Section \ref{multi-hlle},
and thus the numerical solution
$\overline{\bm{U}}_{ij}^{n+1}$ to the first-order scheme \eqref{eq20} with \eqref{add21} is also admissible.
We conclude such result in the following theorem.
\begin{theorem}\label{thm2}
If $\{\overline{\bm{U}}_{ij}^{n}\in \mathcal G,\forall i=1,
\cdots,N; j=1,\cdots,M\}$ and the wave speeds $S_D,S_U,S_L,S_R$ are estimated by \eqref{eq2},
then $\overline{\bm{U}}_{ij}^{n+1}$ obtained by
the first-order scheme \eqref{eq20} with \eqref{add21} and the multidimensional Riemann solver
belongs to the admissible state set $\mathcal G$
under the time step restriction \eqref{eq39}
with   $\sigma\le\frac{1}{2}$.
\end{theorem}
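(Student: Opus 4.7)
The plan is to exhibit $\overline{\bm{U}}_{ij}^{n+1}$ as a convex combination of nine states that each already lie in $\mathcal{G}$, and then invoke convexity of $\mathcal{G}$ from Lemma~\ref{lem1}. The nine states, as indicated in Figure~\ref{inter}, are $\overline{\bm{U}}_{ij}^{n}$ itself, the four 1D HLL intermediate states $\bm{U}^{\ast\ast}_{i\pm\frac12,j},\bm{U}^{\ast\ast}_{i,j\pm\frac12}$ at the midpoints of the four edges of $I_{ij}$, and the four 2D HLL intermediate states $\bm{U}^{\ast}_{i\pm\frac12,j\pm\frac12}$ at the four corners. Admissibility of each $\bm{U}^{\ast\ast}$ is the classical 1D HLL PCP result: it follows from the rearrangement $\bm{U}^{\ast\ast}=\tfrac{S_R}{S_R-S_L}(\bm{U}_R-\bm{F}_R/S_R)+\tfrac{-S_L}{S_R-S_L}(\bm{U}_L-\bm{F}_L/S_L)$ and the three parts of Lemma~\ref{lem2}. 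Admissibility of each $\bm{U}^{\ast}$ is exactly Theorem~\ref{thm1}, which is the precise reason the wave-speed inflation $\alpha=2$ is imposed in \eqref{eq2}.

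The algebraic core of the proof is to substitute the fluxes \eqref{add23}--\eqref{add24} into the update \eqref{eq20} and regroup. For the 1D contribution at each edge one uses the standard identity $\bm{F}^{\text{1D-HLL}}_{i+\frac12,j}=\bm{F}(\overline{\bm{U}}_{ij}^{n})+S_L^{-}\bigl(\bm{U}^{\ast\ast}_{i+\frac12,j}-\overline{\bm{U}}_{ij}^{n}\bigr)$ (and its right-sided counterpart) to convert every 1D edge flux into wave-speed $\times$ (1D intermediate state $-$ cell average); the analogous conversion for the corner fluxes $\bm{F}^{\text{2D-HLL}},\bm{G}^{\text{2D-HLL}}$ into the $\bm{U}^{\ast}$'s is available from the integral derivation of Section~\ref{multi-hlle}. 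After substitution, the right-hand side of \eqref{eq20} becomes a linear combination of the nine states with coefficients built out of $\tfrac{\Delta t^n}{\Delta x_i}S^\pm$ and $\tfrac{\Delta t^n}{\Delta y_j}S^\pm$.

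The sign check on those coefficients is where $\sigma\le\tfrac12$ couples with $\alpha=2$. The inflation $\alpha=2$ gives $|S_L^{-}|,S_R^{+}\le 2\max|\lambda_A|$ and $|S_D^{-}|,S_U^{+}\le 2\max|\lambda_B|$, while the CFL condition \eqref{eq39} with $\sigma\le\tfrac12$ gives $\Delta t^n\max|\lambda_A|\le\tfrac12\Delta x_i$ and $\Delta t^n\max|\lambda_B|\le\tfrac12\Delta y_j$. Hence
\[
(S_R^{+}-S_L^{-})\,\Delta t^n\le 2\,\Delta x_i,\qquad (S_U^{+}-S_D^{-})\,\Delta t^n\le 2\,\Delta y_j,
\]
which are exactly the bounds that keep the ``1D-portion'' coefficients $1-\tfrac{\Delta t^n}{2\Delta x_i}(S_R^{+}-S_L^{-})$ and $1-\tfrac{\Delta t^n}{2\Delta y_j}(S_U^{+}-S_D^{-})$ appearing in \eqref{add23}--\eqref{add24} nonnegative, and in turn keep the coefficient of $\overline{\bm{U}}_{ij}^{n}$ in the nine-term expression nonnegative. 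The eight remaining coefficients in front of the intermediate states are manifestly nonnegative because $S_L^{-}\le 0\le S_R^{+}$ and $S_D^{-}\le 0\le S_U^{+}$ by \eqref{eq29}.

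The main obstacle I expect is the bookkeeping in the regrouping step: at each fixed corner $(i\pm\tfrac12,j\pm\tfrac12)$, one must check that the two 2D-flux contributions arriving from the two edges adjacent to that corner combine into a single, clean weight on $\bm{U}^{\ast}_{i\pm\frac12,j\pm\frac12}$, and one must verify the consistency identity that the nine weights sum to $1$. Once this has been carried out, admissibility of $\overline{\bm{U}}_{ij}^{n+1}$ is immediate from Theorem~\ref{thm1}, the 1D PCP property of the $\bm{U}^{\ast\ast}$, and Lemma~\ref{lem1}.
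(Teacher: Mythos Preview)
Your proposal is correct and follows essentially the same strategy as the paper. The paper's own argument (given in the paragraph preceding Theorem~\ref{thm2}) likewise observes that under \eqref{eq39} with $\sigma\le\tfrac12$ the scheme \eqref{eq20} is an exact cell-integration of the approximate Riemann solutions, hence $\overline{\bm{U}}_{ij}^{n+1}$ is a convex combination of the nine states $\overline{\bm{U}}_{ij}^{n}$, $\bm{U}^{\ast\ast}_{i\pm\frac12,j}$, $\bm{U}^{\ast\ast}_{i,j\pm\frac12}$, $\bm{U}^{\ast}_{i\pm\frac12,j\pm\frac12}$ shown in Figure~\ref{inter}, each admissible by Theorem~\ref{thm1} or the 1D HLL PCP property; your algebraic-regrouping viewpoint and the paper's ``exact integration'' viewpoint are two descriptions of the same decomposition.
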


\section{Numerical tests}\label{num}
This section conducts several numerical experiments on the 2D ultra-relativistic RHD problems with large Lorentz factor, or strong discontinuities, or low rest-mass density or pressure, to verify the
accuracy, robustness, and effectiveness of the present PCP scheme.
It is worth remarking that those ultra-relativistic RHD problems seriously challenge the numerical scheme. Unless otherwise stated, all the computations are restricted to the equation of state
\eqref{eq15} with the adiabatic index $\Gamma=5/3$, and
the time step size $\Delta t^n$ determined by \eqref{eq39} with the CFL number $\sigma=0.45$.

\begin{example}[Sine wave propagation]\label{exam1} \rm
This problem is used to test the accuracy of our PCP finite volume scheme. Its 
exact solution is
$$(\rho,u,v,p)(t,x,y)=(1+\delta\sin(2\pi(x+y-0.99\sqrt{2}t)),
0.99/\sqrt{2},0.99/\sqrt{2},0.01),\ t\geq 0$$
which describe an RHD sine wave propagating periodically in the domain $\Omega=(0,1)\times (0,1)$ at
an angle $45^\circ$ with the $x$-axis.
The computational domain is divided into $N\times N$ uniform cells, $\delta=0.99999$,  and the periodic boundary
conditions are specified on the boundary of $\Omega$.  Table \ref{table1} lists the $\ell^1,\ell^2$ and
$\ell^\infty$ errors at $t=0.1$ and orders of convergence obtained from our first-order multidimensional
PCP scheme. The results show the expected PCP performance.
\end{example}
\begin{table}[H]
\centering
\begin{tabular}{c||c|c||c|c||c|c}
\hline
$N$ & $\ell^1$ error & $\ell^1$ order & $\ell^2$ error & $\ell^2$ order & $\ell^{\infty}$ error & $\ell^{\infty}$ order   \\
\hline
 20 &  5.521E-02 &  ----- &  6.146E-02 &  ----- &  8.683E-02 &  -----\\
 40 &  2.705E-02 &  1.029 &  3.003E-02 &  1.033 &  4.241E-02 &  1.034\\
 80 &  1.338E-02 &  1.016 &  1.486E-02 &  1.015 &  2.101E-02 &  1.013\\
160 &  6.710E-03 &  0.996 &  7.452E-03 &  0.996 &  1.054E-02 &  0.996\\
320 &  3.359E-03 &  0.998 &  3.731E-03 &  0.998 &  5.277E-03 &  0.998\\
\hline
\end{tabular}
\caption{\small Example \ref{exam1}: Errors and orders of convergence for the rest-mass density $\rho$ at $t=0.1$
obtained by using the first-order PCP scheme with the mesh of $N\times N$ uniform cells.}
\label{table1}
\end{table}

\begin{example}[Relativistic isentropic vortex]\label{exam2} \rm
It is a 2D relativistic isentropic vortex problem constructed first in \cite{ling}, where
the vortex in the space-time coordinate system $(t,x,y)$ moves with a constant speed of
magnitude $w$ in $(-1,-1)$ direction.
The time-dependent solution $(\rho,u,v,p)$ at time $t\geq 0$
is given as follows
\begin{align*}
&\rho=(1-\alpha e^{1-r^2})^{\frac{1}{\Gamma-1}},\quad p=\rho^\Gamma,\\
&u=\frac{1}{1-\frac{w(u_0+v_0)}{\sqrt{2}}}\left[\frac{u_0}{\gamma}-\frac{w}{\sqrt{2}}+\frac{\gamma w^2}{2(\gamma+1)}(u_0+v_0)\right],\\
&v=\frac{1}{1-\frac{w(u_0+u_0)}{\sqrt{2}}}\left[\frac{v_0}{\gamma}-\frac{w}{\sqrt{2}}+\frac{\gamma w^2}{2(\gamma+1)}(u_0+v_0)\right],
\end{align*}
where
\begin{align*}
&\gamma=\frac{1}{\sqrt{1-w^2}},\quad r=\sqrt{x_0^2+y_0^2},\quad (u_0,v_0)=(-y_0,x_0)f,\\
&\alpha=\frac{(\Gamma-1)}{8\Gamma}\pi^2\epsilon^2,\quad\beta=\dfrac{\Gamma^2\alpha e^{1-r^2}}{2\Gamma-1-\Gamma\alpha e^{1-r^2}},
\quad f=\sqrt{\frac{\beta}{1+\beta r^2}},\\
&x_0=x+\frac{\gamma-1}{2}(x+y)+\frac{\gamma tw}{\sqrt{2}},~~y_0=y+\frac{\gamma-1}{2}(x+y)+\frac{\gamma tw}{\sqrt{2}}.
\end{align*}
Our computations are performed in the domain
$\Omega=(-5,5)\times (-5,5)$ with the adiabatic index $\Gamma=1.4$, $w=0.5\sqrt{2}$,
 the vortex strength $\epsilon=10.0828$, and the periodic boundary conditions. In this case, the
lowest density and lowest pressure are $7.8\times 10^{-15}$ and
$1.78\times 10^{-20}$, respectively.

Table \ref{table2} gives the errors of the rest-mass density at $t=1$ and the orders of convergence. It is clear to see that our multidimensional PCP scheme achieves the expected
accuracy and preserves the positivity of the density and pressure simultaneously.
\end{example}
\begin{table}[H]
\centering
\begin{tabular}{c||c|c||c|c||c|c}
\hline
$N$ & $\ell^1$ error & $\ell^1$ order & $\ell^2$ error & $\ell^2$ order & $\ell^{\infty}$ error &
$\ell^{\infty}$ order   \\
\hline
 20 &  1.566E-02 &  ----- &  5.026E-02 &  ----- &  3.285E-01 &  -----\\
 40 &  8.881E-03 &  0.818 &  2.855E-02 &  0.816 &  2.061E-01 &  0.672\\
 80 &  4.779E-03 &  0.894 &  1.550E-02 &  0.881 &  1.189E-01 &  0.794\\
160 &  2.495E-03 &  0.938 &  8.212E-03 &  0.916 &  6.644E-02 &  0.839\\
320 &  1.280E-03 &  0.963 &  4.260E-03 &  0.947 &  3.528E-02 &  0.913\\
\hline
\end{tabular}
\caption{\small Example \ref{exam2}: Errors and orders of convergence for rest-mass
density at $t=1$ obtained by using the first-order scheme with the mesh of $N\times N$ uniform cells.}
 \label{table2}
\end{table}

\begin{example}[Explosion]\label{exam3} \rm
It is used to test the multi-dimensionality of our scheme.
Initially, the rest fluid with a unit rest-mass density is  in the domain  $\Omega=(-0.5,0.5)\times (-0.5,0.5)$.
The pressure is set as 20 inside a circle of radius 1/10, while
a smaller pressure of 0.1 is given all over outside the circle.
 Figure \ref{fig8}  plots the contours and cut lines along $y$-axis and $y=x$
of the rest-mass density at $t=0.1$ obtained by using
our scheme with the multidimensional Riemann solver
on the mesh of $64\times64$ uniform cells.
For a comparison, Figures \ref{fig8b} gives the numerical solutions
obtained by using
the scheme with the 1D Riemann solver.
It can be clearly seen from them that the results obtained by
our scheme with the 2D Riemann solver preserve the spherical symmetry better.
\end{example}


\begin{figure}[H]
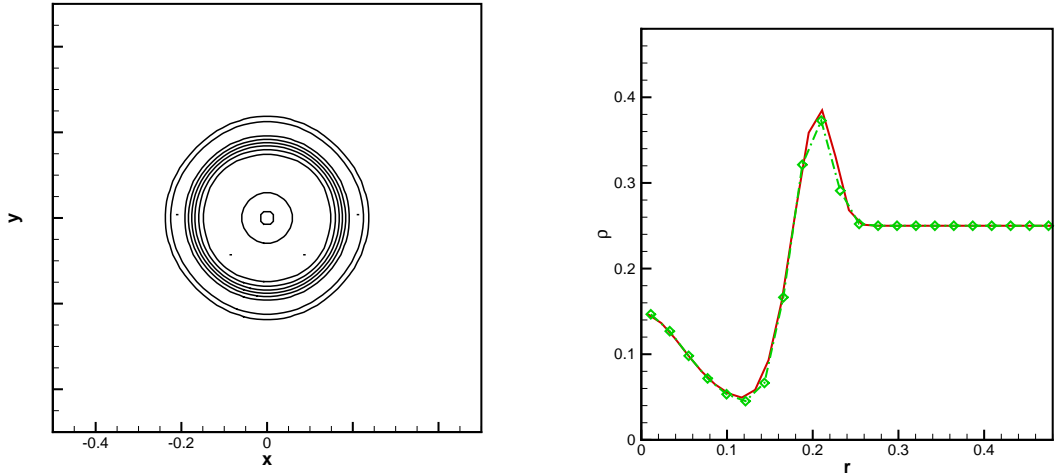

\centering
\includegraphics[width=3.0in]{figures/contour_2d.pdf}
\includegraphics[width=3.0in]{figures/cut_2d.pdf}
\caption{\small Example \ref{exam3}: The rest-mass density at $t = 0.1$
obtained our scheme.
Left: The contours with eight equally spaced contour lines;
right: the cut lines of $\rho$ at $y$-axis (solid line) and $y=x$ (dashed line with the symbol ``$\diamond$'').}
\label{fig8}
\end{figure}

\begin{figure}[H]
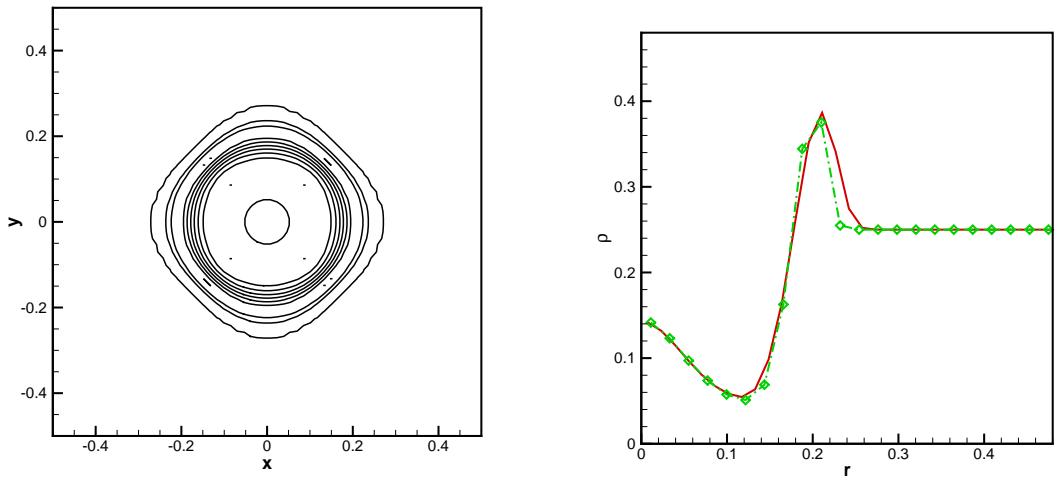

\centering
\includegraphics[width=3.0in]{figures/contour_1d.pdf}
\includegraphics[width=3.0in]{figures/cut_1d.pdf}
\caption{\small Same as Figure \ref{fig8}, except for the 1D
HLL Riemann solver.}
\label{fig8b}
\end{figure}

\begin{example}[Riemann problem I]\label{exam4} \rm
The example solves the 2D Riemann problem \cite{zanna,wu2017}.
The initial data are given  by
\begin{equation*}
(\rho,u,v,p)(0,x,y)=\begin{cases}
(0.1,0,0,0.01),& x>0,~y>0,\\
(0.1,0.99,0,1),& x<0,~y>0,\\
(0.5,0,0,1),& x<0,~y<0,\\
(0.1,0,0.99,1),& x>0,~y<0,
\end{cases}
\end{equation*}
where both the left and bottom discontinuities are the contact discontinuities with a jump in the transverse
velocity, while both the right and top discontinuities are not simple waves.
It is necessary to remark that this case is different from that in \cite{yang2012direct}.

The computational domain $\Omega$ is taken as $(-1,1)\times (-1,1)$ and is divided into a uniform mesh with $400\times400$ cells.
Figure \ref{fig2} displays the contours of the rest-mass density logarithm $\ln\rho$ and the pressure
logarithm $\ln p$ at $t=0.8$ obtained by using the first-order multidimensional PCP scheme. We can see that
the four initial discontinuities interact each other and form
two reflected curved shock waves, an elongated jet-like spike.
It is worth mentioning that a non-PCP scheme fails when simulating this problem.
\end{example}
\vspace{-4ex}
\begin{figure}[H]
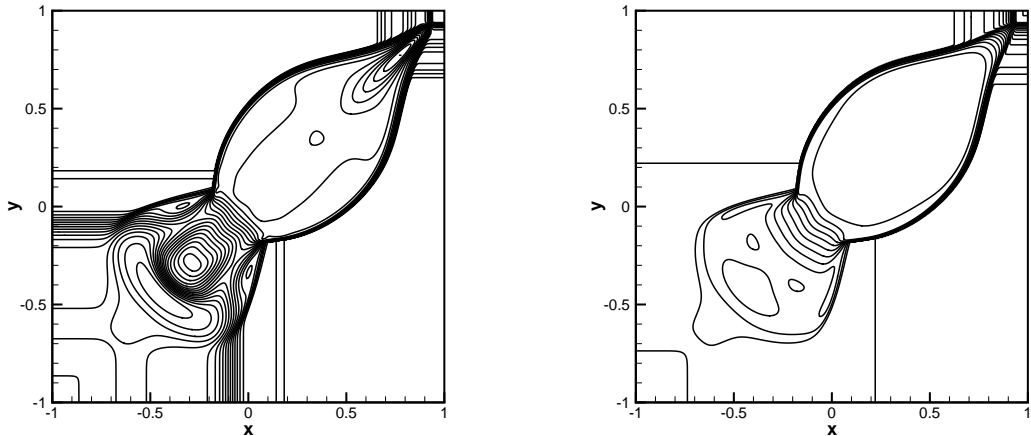

\centering
\includegraphics[width=3.0in]{figures/RP1_density.pdf}
\includegraphics[width=3.0in]{figures/RP1_pressure.pdf}
\caption{\small Example \ref{exam4}: The contours of the density logarithm $\ln\rho$ (left)
and the pressure logarithm $\ln p$ (right) at $t=0.8$. 30 equally spaced contour lines are used.}
\label{fig2}
\end{figure}

\begin{example}[Riemann problem II]\label{exam5} \rm
The initial data of the second Riemann problem   are
\begin{equation*}
(\rho,u,v,p)(0,x,y)=\begin{cases}
(0.1,0,0,20),& x>0,~y>0,\\
(\widetilde{\rho},\widetilde{u},0,0.05),& x<0,~y>0,\\
(0.01,0,0,0.05),& x<0,~y<0,\\
(\widetilde{\rho},0,\widetilde{u},0.05),& x>0,~y<0,
\end{cases}
\end{equation*}
with $\widetilde{\rho}=0.00414329639576$ and $\widetilde{u}=0.9946418833556542$,
In this problem, the left and lower initial discontinuities are the contact discontinuities, while
the upper and right are shock waves with a speed of $-0.66525606186639$.
As the time increases, the maximal value of the fluid velocity becomes very large and close to the
speed of light, which leads to the numerical simulation more challenging.
Figure \ref{fig3} displays the contours of the rest-mass density logarithm $\ln\rho$ and
the pressure logarithm $\ln p$ at $t=0.8$ obtained by using the first-order PCP scheme with
the multidimensional
HLL Riemann solver on the mesh of $400\times400$ uiniform cells.
for $\Omega=(-1,1)\times(-1,1)$.
The interaction of four initial discontinuities results in the distortion of the initial
shock waves and the formation of a ``mushroom cloud'' starting from the point (0,0)
and expanding to the left bottom region.
\end{example}
\vspace{-4ex}
\begin{figure}[H]
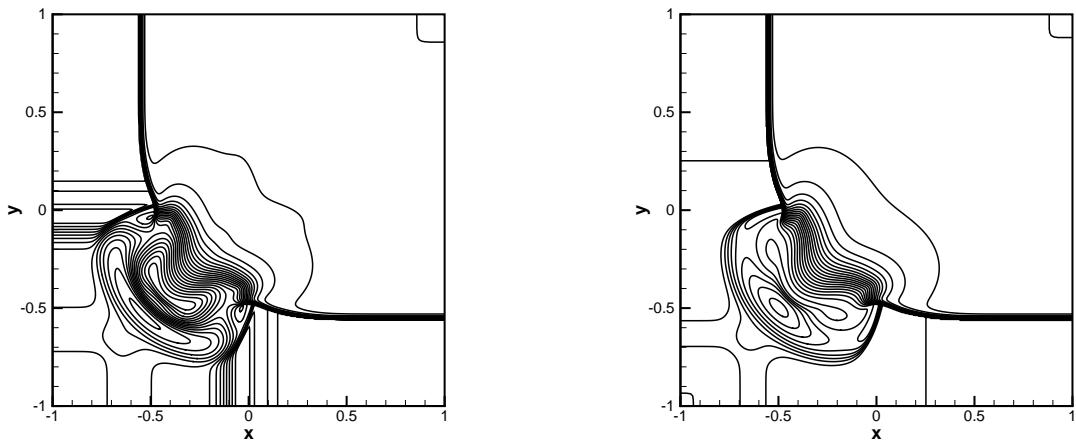

\begin{minipage}{0.5\textwidth}
\centering
\includegraphics[width=3.0in]{figures/RP2_density.pdf}
\end{minipage}
\begin{minipage}{0.5\textwidth}
\centering
\includegraphics[width=3.0in]{figures/RP2_pressure.pdf}
\end{minipage}
\caption{\small Example \ref{exam5}: The contours of the density logarithm $\ln\rho$ (left)
and the pressure
logarithm $\ln p$ (right) at $t=0.8$. 25 equally spaced contour lines are shown.}
\label{fig3}
\end{figure}

\begin{example}[Relativistic jets]\label{exam6} \rm
The last example is to simulate two high-speed relativistic jet flows \cite{wu2017}, which are ubiquitous in the extragalactic radio sources associated with the active galactic nuclei, and the most compelling case for a special relativistic phenomenon.
It is very challenging to simulate such jet flows because there may appear the strong relativistic shock waves, shear waves, interface instabilities,
and ultra-relativistic regions,  as well as high speed
jets etc.

First, we consider a pressure-matched hot jet model, in which
the relativistic effects from the large beam internal energies are
important and comparable to the effects from the fluid velocity
near the speed of light because the classical beam Mach number $M_b$ is near the minimum Mach number for given beam speed
$v_b$.
Initially, the computational domain
$(0,12)\times(0, 30)$ is filled with a static uniform medium with an
unit rest-mass density, and a light relativistic jet is injected in the
$y$-direction through the inlet part $|x|\le0.5$ on the bottom
boundary $(y=0)$ with a high speed $v_b$, a rest-mass density of 0.01, and
a pressure equal to the ambient pressure.
The  fixed inflow beam condition
is specified on the nozzle $\{y=0,|x|\le0.5\}$, the reflecting boundary
condition is specified at $x=0$, whereas the
outflow boundary conditions are on other boundaries.
The following three different configurations are considered:
\begin{enumerate}[(\romannumeral1)]
\item $v_b=0.99$ and $M_b=1.72$, corresponding to the case of $\gamma\approx7.089$ and
$M_r\approx9.971$.
\item $v_b=0.999$ and $M_b=1.72$, corresponding to the case of $\gamma\approx22.366$ and
$M_r\approx31.316$.
\item $v_b=0.9999$ and $M_b=1.72$, corresponding to the case of $\gamma\approx70.712$ and
$M_r\approx98.962$.
\end{enumerate}
Here  $M_r:=M_b\gamma/\gamma_s$ denotes  the relativistic Mach number
with $\gamma_s=1/\sqrt{1-c_s^2}$ being the Lorentz factor associated with the
local sound speed.

As $v_b$ becomes much closer to the speed of light, the simulation of the jet becomes more challenging.
Figures \ref{fig4} and \ref{fig5} display the schlieren images of the rest-mass density logarithm
$\ln\rho$ and the
pressure logarithm $\ln p$ within the domain $[-12,12]\times[0,30]$ at $t=30$ obtained by using
the first order scheme with multidimensional HLL
Riemann solver with $240\times600$  uniform cells
for the computational domain $(0,12)\times(0,30)$.
\begin{figure}[H]
\centering
\includegraphics[width=1.5in]{figures/Jet1_density_1.pdf}
\includegraphics[width=1.5in]{figures/Jet1_density_2.pdf}
\includegraphics[width=1.5in]{figures/Jet1_density_3.pdf}
\caption{\small Example \ref{exam6}: Schlieren images of rest-mass density logarithm $\ln\rho$
at $t=30$ for the hot jet model obtained by the first order scheme with $240\times600$ uniform cells. From left to right:
configurations (\romannumeral1), (\romannumeral2) and (\romannumeral3).}
\label{fig4}
\end{figure}

\begin{figure}[H]
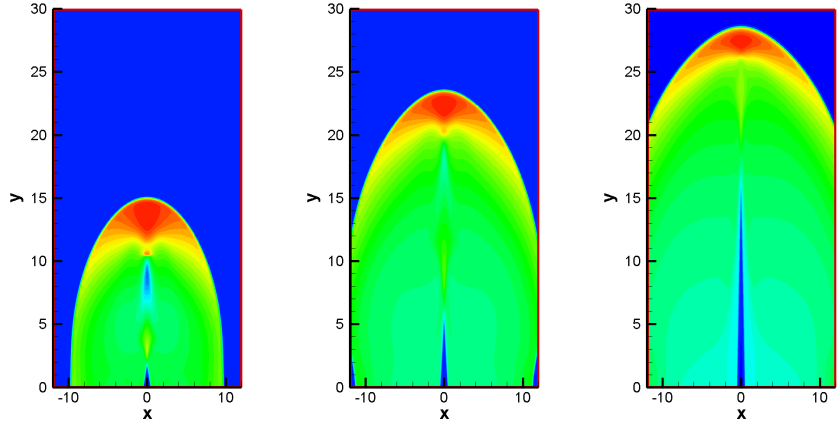

\centering
\includegraphics[width=1.5in]{figures/Jet1_pressure_1.pdf}
\includegraphics[width=1.5in]{figures/Jet1_pressure_2.pdf}
\includegraphics[width=1.5in]{figures/Jet1_pressure_3.pdf}
\caption{\small Same as Figure \ref{fig4} except for the  pressure logarithm $\ln p$.}
\label{fig5}
\end{figure}

Next, we consider the pressure-matched highly supersonic
jet model, also called as
the cold model, in which the relativistic effects from the large
beam speed dominate, so that there exists an important
difference between the hot and cold relativistic jets.
The setups
are the same as the above hot jet model, except for that the rest-mass
density of the inlet jet becomes 0.1. The computational domain is
taken as
$(0,12)\times(0, 25)$, and again three different configurations
are considered here:
\begin{enumerate}[(\romannumeral1)]
\item $v_b=0.99$ and $M_b=50$, corresponding to the case of $\gamma\approx7.088$ and
$M_r\approx354.371$.
\item $v_b=0.999$ and $M_b=50$, corresponding to the case of $\gamma\approx22.366$ and
$M_r\approx1118.090$.
\item $v_b=0.9999$ and $M_b=500$, corresponding to the case of $\gamma\approx70.712$ and
$M_r\approx35356.152$.
\end{enumerate}
Figures \ref{fig6} and \ref{fig7} show the schlieren images of the
rest-mass density logarithm $\ln\rho$ and the
pressure logarithm $\ln p$ within the domain $[-12,12]\times[0,25]$, obtained by using
the first order multidimensional HLL scheme with $240\times500$ uniform cells for the computational domain $(0,12)\times(0,25)$.
\end{example}
\begin{figure}[H]
\centering
\includegraphics[width=1.5in]{figures/Jet2_density_1.pdf}
\includegraphics[width=1.5in]{figures/Jet2_density_2.pdf}
\includegraphics[width=1.5in]{figures/Jet2_density_3.pdf}
\caption{\small Example \ref{exam6}: Schlieren images of rest-mass density logarithm $\ln\rho$ for the  cold jet model obtained by
the first order scheme with $240\times500$ uniform cells. From left to right: configurations
(\romannumeral1) at $t=30$, (\romannumeral2) at $t=25$, and (\romannumeral3) at $t=23$.}
\label{fig6}
\end{figure}

\begin{figure}[H]
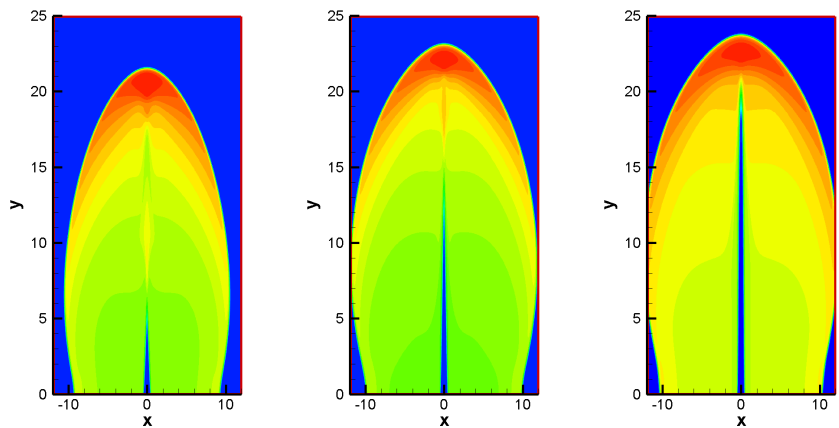

\centering
\includegraphics[width=1.5in]{figures/Jet2_pressure_1.pdf}
\includegraphics[width=1.5in]{figures/Jet2_pressure_2.pdf}
\includegraphics[width=1.5in]{figures/Jet2_pressure_3.pdf}
\caption{\small Same as Figure \ref{fig6} except for the pressure logarithm $\ln p$.}
\label{fig7}
\end{figure}

\section{Conclusion}\label{con}

This paper proposed a finite volume scheme based on the multidimensional HLL Riemann solver for the 2D special relativistic hydrodynamics and then studied its PCP property (i.e., preserving the positivity of the rest-mass density and the pressure  and the boundness of the fluid velocity).
We first proved that the intermediate states in the 1D HLL Riemann solver were PCP when the HLL wave speeds were estimated suitably, and  further obtain the PCP property of the intermediate states obtained from
the multidimensional HLL Riemann solver in a similar way.
Then we developed the first-order accurate PCP finite volume
scheme with the multidimensional HLL Riemann solver and forward Euler time discretization.
Finally, several 2D numerical experiments were conducted to
demonstrate the accuracy and the effectiveness of the proposed PCP scheme in solving
the special RHD problems involving large Lorentz factor, or low
rest-mass density or low pressure or strong discontinuities, etc.
It is worth to remark that the further development of the high-order accurate PCP genuinely multidimensional  scheme to higher-order of accuracy seems non-trivial, since
some different coefficients have been involved in the numerical fluxes, see \eqref{add23}
and \eqref{add24}, so that 
it is too difficult to rewrite a high-order
scheme as a convex combination of several first-order like schemes.
One possible way to overcome such difficulty is to design a PCP flux-limiter and then modify the numerical flux
to achieve the PCP property of the high-order scheme, see e.g. \cite{xu}.

\begin{appendix}
\renewcommand{\thesection}{Appendix \Alph{section}}
\section{Proof of Lemma \ref{lem2}}\label{appendix}
\renewcommand{\thesection}{\Alph{section}}
This appendix proves Lemma \ref{lem2} presented in
Section \ref{intro} in a slightly different way from
\cite{wu2015}.

(\romannumeral1) For any positive number $\kappa$, let $(D^\kappa,\bm{m}^\kappa,E^\kappa )^T=\bm{U}^{\kappa}:=\kappa\bm{U}$. Since $\bm{U}\in\mathcal{G}$, it is easy to verify
\begin{equation*}
\begin{aligned}
&D^{\kappa}=\kappa D>0,\ \
&E^\kappa-\sqrt{(D^\kappa)^2+|\bm{m}^\kappa|^2}=\kappa\big(E-\sqrt{D^2+|\bm{m}|^2}\big)>0,
\end{aligned}
\end{equation*}
which leads to admissibility of $\kappa\bm{U}$.

(\romannumeral2) The convexity of $\mathcal{G}$ shows
$$\frac{a_1}{a_1+a_2}\bm{U}_1+\frac{a_2}{a_1+a_2}\bm{U}_2\in\mathcal{G},$$
for any $a_1,a_2>0$ and $\bm{U}_1,\bm{U}_2\in\mathcal{G}$.
Combining it with the conclusion in
(\romannumeral1) yields
$$a_1\bm{U}_1+a_2\bm{U}_2\in\mathcal{G}.$$

(\romannumeral3) For simplicity, denote
$$\begin{aligned}
(D^\alpha,\bm{m}_i^\alpha,E^\alpha)^T=\bm{U}^\alpha:
&=\alpha\bm{U}-\bm{F}_i(\bm{U}),\\
(D^\beta,\bm{m}_i^\beta,E^\beta)^T=\bm{U}^\beta:
&=-\beta\bm{U}+\bm{F}_i(\bm{U}).
\end{aligned}$$
For the state $\bm{U}^\alpha$ with $\alpha\ge\lambda_i^{(4)}(\bm{U})$,
we can get
\begin{align*}
D^\alpha&=D(\alpha-u_i)\ge D\big(\lambda_i^{(4)}(\bm{U})-u_i\big)>0,\\
E^\alpha&=E(\alpha-u_i)-pu_i\ge E\big(\lambda_i^{(4)}(\bm{U})-u_i\big)-pu_i\\
&=\frac{p\gamma^{2}}{c_s^2}\bigg(\big(\Gamma-c_s^2\gamma^{-2}\big)
\frac{u_i(1-c_s^2)+c_s\gamma^{-1}\sqrt{1-u_i^2-c_s^2(|\bm{u}|^2-u_i^2)}}
{1-c_s^2|\bm{u}|^2}-\Gamma u_i\bigg)\\
&\ge\frac{p\gamma^{2}}{c_s^2}\bigg(\big(\Gamma-c_s^2\gamma^{-2}\big)
\frac{u_i(1-c_s^{2})+c_s\gamma^{-2}}{1-c_s^{2}|\bm{u}|^2}-\Gamma u_i\bigg)\\
&=\frac{p}{c_s(1-c_s^{2}|\bm{u}|^2)}\bigg(-c_su_i(\Gamma-c_s^2+1)
+\Gamma-c_s^2\gamma^{-2}\bigg)\\
&\ge\frac{p}{c_s(1-c_s^{2}|\bm{u}|^2)}\bigg(-c_s|
\bm{u}|(\Gamma-c_s^2+1)+\Gamma-c_s^2\gamma^{-2}\bigg)\\
&=\frac{p}{c_s(1+c_s|\bm{u}|)}\bigg(\Gamma-c_s^2-c_s|\bm{u}|\bigg)>0,
\end{align*}
and
\begin{align*}
(E^\alpha)^2-|\bm{m}^\alpha|^2-(D^\alpha)^2&=(E^2-|\bm{m}|^2-D^2-p^2)
(\alpha-u_i)^2+p^2(\alpha^2-1)\\
&=\frac{\Gamma p^2}{c_s^2}\gamma^{2}\bigg(\frac{2}{\Gamma-1}-\frac{\Gamma c_s^2}{(\Gamma-1)^2}\bigg)
(\alpha-u_i)^2+p^2(\alpha^2-1)\\
&=p^2\cdot f(\alpha),
\end{align*}
where $f(s)$ is a quadratic function of $s\in[\lambda_i^{(4)}(\bm{U}),1)$ with the form of
\begin{equation*}
f(s)=\frac{\Gamma\gamma^{2}}{c_s^2}\bigg(\frac{2}{\Gamma-1}-\frac{\Gamma c_s^2}{(\Gamma-1)^2}\bigg)
(s-u_i)^2+s^2-1.
\end{equation*}
It is easy to prove that $f(s)$ is monotonically increasing with $s\in[\lambda_i^{(4)}(\bm{U}),1)$,
so that
$f(s)\ge f(\lambda_i^{(4)}(\bm{U}))$ for any $s\in[\lambda_i^{(4)}(\bm{U}),1)$ and then
$f(\alpha)\ge f(\lambda_i^{(4)}(\bm{U}))$. Moreover, we   have
\begin{equation*}
\begin{aligned}
f(\lambda_i^{(4)}(\bm{U}))&=\frac{2\Gamma(\Gamma-1)-\Gamma^2c_s^2}{(\Gamma-1)^2(1-c_s^2|\bm{u}|^2)^2}
\bigg(-\frac{c_su_i}{\gamma}+\sqrt{1-u_i^{2}-c_s^{2}(|\bm{u}|^2-u_i^2)}\bigg)^2\\
&~~~+\frac{\bigg(u_i(1-c_s^2)+c_s\gamma^{-1}\sqrt{1-u_i^{2}-c_s^{2}(|\bm{u}|^2-u_i^2)}\bigg)^2}
{(1-c_s^2|\bm{u}|^2)^2}-1\\
&=C_1\bigg(c_su_i\gamma^{-1}-\sqrt{1-u_i^{2}-c_s^{2}(|\bm{u}|^2-u_i^2)}\bigg)^2\ge0,
\end{aligned}
\end{equation*}
with
$$C_1=\frac{1}{(1-c_s^2|\bm{u}|^2)^2}\bigg(\Gamma^2-1+c_s^2(1-2\Gamma)\bigg)>0.$$
Therefore,   $f(\alpha)>0$ and then $(E^\alpha)^2-|\bm{m}^\alpha|^2-(D^\alpha)^2>0$. So far,
we have proved the conclusion $\alpha\bm{U}-\bm{F}(\bm{U})\in \mathcal{G}$ for $\alpha\ge\lambda_i^{(d+2)}(\bm{U})$.

For the state  $\bm{U}^\beta$ with $\beta\le\lambda_i^{(1)}(\bm{U})$,
one can similarly have
\begin{align*}
D^\beta&=D(u_i-\beta)\ge D\big(u_i-\lambda_i^{(1)}(\bm{U})\big)>0,\\
E^\beta&=E(u_i-\beta)+pu_i\ge E\big(u_i-\lambda_i^{(1)}(\bm{U})\big)+pu_i\\
&=\frac{p\gamma^{2}}{c_s^2}\bigg(-\big(\Gamma-c_s^2\gamma^{-2}\big)
\frac{u_i(1-c_s^{2})-c_s\gamma^{-1}\sqrt{1-u_i^2-c_s^2(|\bm{u}|^2-u_i^2)}}
{1-c_s^{2}|\bm{u}|^2}+\Gamma u_i\bigg)\\
&\ge\frac{p\gamma^{2}}{c_s^2}\bigg(-\big(\Gamma-c_s^2\gamma^{-2}\big)
\frac{u_i(1-c_s^{2})-c_s\gamma^{-2}}{1-c_s^{2}|\bm{u}|^2}+\Gamma u_i\bigg)\\
&=\frac{p}{c_s(1-c_s^{2}|\bm{u}|^2)}\bigg(c_su_i(\Gamma-c_s^2+1)+\Gamma-c_s^2\gamma^{-2}\bigg)\\
&\ge\frac{p}{c_s(1-c_s^{2}|\bm{u}|^2)}\bigg(-c_s|\bm{u}|(\Gamma-c_s^2+1)+\Gamma-c_s^2\gamma^{-2}\bigg)\\
&=\frac{p}{c_s(1+c_s|\bm{u}|)}\bigg(\Gamma-c_s^2-c_s|\bm{u}|\bigg)>0,
\end{align*}
and
\begin{align*}
(E^\beta)^2-|\bm{m}^\beta|^2-(D^\beta)^2&=(E^2-|\bm{m}|^2-D^2-p^2)(\beta-u_i)^2+p^2(\beta^2-1)\\
&=\frac{\Gamma p^2}{c_s^2}\gamma^{2}\bigg(\frac{2}{\Gamma-1}-\frac{\Gamma c_s^2}{(\Gamma-1)^2}\bigg)
(\beta-u_i)^2+p^2(\beta^2-1)\\
&=p^2\cdot g(\beta),
\end{align*}
where $g(s)$ is a quadratic function of $s\in(-1,\lambda_i^{(1)}(\bm{U})]$ with the form of
\begin{equation*}
g(s)=\frac{\Gamma\gamma^{2}}{c_s^2}\bigg(\frac{2}{\Gamma-1}-\frac{\Gamma c_s^2}{(\Gamma-1)^2}\bigg)
(s-u_i)^2+s^2-1.
\end{equation*}
It is easy to  prove that $g(s)$ is monotonically decreasing with $s\in(-1,\lambda_i^{(1)}(\bm{U})]$,
so that $g(s)\ge g(\lambda_i^{(1)}(\bm{U}))$ for any $s\in(-1,\lambda_i^{(1)}(\bm{U})]$ and then
$g(\beta)\ge g(\lambda_i^{(1)}(\bm{U}))$. Moreover, we can show
\begin{align*}
g(\lambda_i^{(1)}(\bm{U}))&=\frac{2\Gamma(\Gamma-1)-\Gamma^2c_s^2}{(\Gamma-1)^2(1-c_s^2|\bm{u}|^2)^2}
\bigg(c_su_i\gamma^{-1}+\sqrt{1-u^{2}-c_s^{2}(|\bm{u}|^2-u_i^2)}\bigg)^2\\
&~~~+\frac{\bigg(u_i(1-c_s^2)-c_s\gamma^{-1}\sqrt{1-u_i^{2}-c_s^{2}(|\bm{u}|^2-u_i^2)}\bigg)^2}
{(1-c_s^2|\bm{u}|^2)^2}-1\\
&=C_2\bigg(c_su_i\gamma^{-1}+\sqrt{1-u_i^{2}-c_s^{2}(|\bm{u}|^2-u_i^2)}\bigg)^2\ge0,
\end{align*}
with
$$C_2=\frac{1}{(1-c_s^2|\bm{u}|^2)^2}\bigg(\Gamma^2-1+c_s^2(1-2\Gamma)\bigg)>0.$$
Therefore,  $g(\beta)>0$ and then $(E^\beta)^2-|\bm{m}^\beta|^2-(D^\beta)^2>0$, which leads to
$-\beta\bm{U}+\bm{F}_i(\bm{U})\in \mathcal{G}$ for $\beta\le\lambda_i^{(1)}(\bm{U})$.
\qed
\end{appendix}


\begin{thebibliography}{99}

\bibitem{abgrall}R. Abgrall, A genuinely multidimensional Riemann solver, \emph{Research Report},
RR-1859, 1993.


\bibitem{barsara}D.S. Balsara, Multidimensional HLLE Riemann solver: Application to Euler and
magnetohydrodynamic flows, \emph{J. Comput. Phys.}, 229 (2010) 1970-1993.


\bibitem{barsara2}D.S. Balsara, A two-dimensional HLLC Riemann solver for conservation laws:
pplication to Euler and magnetohydrodynamic flow, \emph{J. Comput. Phys.}, 231 (2012) 7476-7503.

\bibitem{barsara3}D.S. Balsara, M. Dumbser and R. Abgrall, A multidimensional HLLC Riemann solver for unstructured meshe-With application to Euler and MHD flows,
\emph{J. Comput. Phys.}, 261 (2014) 172-208.


\bibitem{batten}P. Batten, N. Clarke, C. Lambert and D.M. Causon, On the choice of wavespeeds for the HLLC Riemann solver,
\emph{SIAM J. Sci. Comput.}, 18 (1997) 1553-1570.

\bibitem{capdeville1}G. Capdeville, A multidimensional HLL-Riemann solver for Euler equations of gas dynamics,
\emph{Comput. Fluids}, 47 (2011) 122-147.

\bibitem{capdeville2}G. Capdeville, A multidimensional HLL-Riemann solver for non-linear hyperbolic systems,
\emph{Int. J. Numer. Meth. Fluids}, 67 (2011) 1899-1931.

\bibitem{colella}P. Colella, A direct Eulerian MUSCL scheme for gas dynamics,
\emph{SIAM J. Sci. Stat. Comput.}, 6 (1985) 104-117.


\bibitem{davis}S. F. Davis, Simplified second-order Godunov-type methods,
\emph{SIAM J. Sci. and Stat. Comput.}, 9(3)(1988) 445-473.

\bibitem{dolezal}A. Dolezal, S.S.M. Wong,
Relativistic hydrodynamics and essentially non-oscillatory shock capturing schemes,
\emph{J. Comput. Phys.}, 120 (1995) 266-277.


\bibitem{Duan-Tang2020RHD}
J.M. Duan and H.Z. Tang, High-order accurate entropy stable finite difference schemes for one- and two-dimensional special relativistic hydrodynamics, \emph{Adv. Appl. Math. Mech.}, 12 (2020) 1-29.

\bibitem{Duan-Tang2020RMHD}
J.M. Duan and H.Z. Tang, High-order accurate entropy stable nodal discontinuous Galerkin schemes for the ideal special relativistic magnetohydrodynamics, \emph{J. Comput. Phys.}, 421 (2020) 109731.

\bibitem{Duan-Tang2020RHD2}
J.M. Duan and H.Z. Tang,
Entropy stable adaptive moving mesh schemes for 2D and 3D special relativistic hydrodynamics, arXiv: 2007.12884, 25 Jul 2020.




%
%
\bibitem{einfeldt}B. Einfeldt, On Godunov-type methods for gas dynamics, \emph{SIAM J. Numer. Anal.}, 25 (3) (1988) 294-318.


%
%

\bibitem{font}J.A. Font,
Numerical hydrodynamics and magnetohydrodynamics in general relativity,
\emph{Living Rev. Relativ.}, 11 (2008) 7.

\bibitem{harten}A. Harten, P.D. Lax and B.van Leer, On upstream differencing and Godunov-type schemes
for hyperbolic conservation laws, \emph{SIAM Rev.}, 25 (1983) 289-315.

\bibitem{he1}P. He and H.Z. Tang, An adaptive moving mesh method for two-dimensional relativistic
hydrodynamics, \emph{Commun. Comput. Phys.}, 11 (2012) 114-146.

\bibitem{he2}P. He and H.Z. Tang, An adaptive moving mesh method for two-dimensional
relativistic magnetohydrodynamics, \emph{Comput. Fluids}, 60 (2012) 1-20.

%



\bibitem{ling}D. Ling, J.M. Duan and H.Z. Tang, Physical-constraints-preserving Lagrangian
finite volume schemes for one-and two-dimensional special relativistic hydrodynamics,
\emph{J. Comput. Phys.}, 396 (2019) 507-543.

\bibitem{lora}F.D. Lora-Clavijo, J.P. Cruz-P\'{e}rez, F.S. Guzm\'{a}n and J.A. Gonz\'{a}lez,
Exact solution of the 1D Riemann problem in Newtonian and relativistic hydrodynamics,
\emph{Rev. Mex. F\'{\i}s. E}, 59 (2013) 28-50.

\bibitem{mandal}J.C. Mandal and V.Sharma, A genuinely multidimensional convective pressure flux
split Riemann solver for Euler equations, \emph{J. Comput. Phys.}, 297 (2015) 669-688.

\bibitem{marti1}J.M. Mart\'{\i} and E. M\"{u}ller,
The analytical solution of the Riemann problem in relativistic hydrodynamics,
\emph{J. Fluid Mech.}, 258 (1994) 317-333.


\bibitem{marti2}J.M. Mart\'{\i} and E. M\"{u}ller, Numerical hydrodynamics in special relativity,
\emph{Living Rev. Relativ.}, 6 (2003) 7.

\bibitem{marti2015}J.M. Mart\'{\i} and E. M\"{u}ller, Grid-based methods in relativistic hydrodynamics
and magnetohydrodynamics, \emph{Living Rev. Comput. Astrophys.}, 1 (2015) 3.


\bibitem{may1}M.M. May and R.H. White, Hydrodynamics calculations of general-relativistic collapse,
\emph{Phys. Rev.}, 141 (1966) 1232-1241.

\bibitem{may2}M.M. May and R.H. White, Stellar dynamics and gravitational collapse,
in \emph{Methods Comput. Phys.},
Vol. 7, edited by B. Alder, S. Fernbach, \& M. Rotenberg, New York: Academic, 1967, 219-258.




\bibitem{pant}
V. Pant, Global entropy solutions for isentropic relativistic fluid dynamics, \emph{Commun. Part. Diff. Eq.}, 21 (1996) 1609-1641.


\bibitem{qin}
T. Qin, C.-W. Shu and Y. Yang, Bound-preserving discontinuous Galerkin methods for relativistic
hydrodynamics, \emph{J. Comput. Phys.}, 315 (2016) 323-347.

\bibitem{qu}F. Qu, D. Sun, J. Bai and C. Yan, A genuinely two-dimensional Riemann solver for
compressible flows in curvilinear coordinates, \emph{J. Comput. Phys.}, 386 (2019) 47-63.

\bibitem{RezzollaDG2011}D. Radice and L. Rezzolla, Discontinuous Galerkin methods for general-relativistic
hydrodynamics: formulation and application to spherically symmetric spacetimes, \emph{Phys. Rev. D},
84 (2011) 024010.

\bibitem{roe}P.L. Roe, Approximate Riemann solver, parameter vectors and difference schemes,
\emph{J. Comput. Phys.}, 43 (1981) 357-372.


\bibitem{tchekhovskoy}A. Tchekhovskoy, J. C. McKinney and R. Narayan, WHAM: a WENO-based general
relativistic numerical scheme, I. hydrodynamics, \emph{Mon. Not. R. Astron. Soc.}, 379 (2007) 469-497.

\bibitem{toro}E.F. Toro, \emph{Riemann Solvers and Numerical Methods for Fluid Dynamics: A Practical
Introdution}, 3rd edition, Springer, 2009.

\bibitem{vanLeer1993}B. van Leer, Progress in multi-dimensional upwind differencing. In: Napolitano M., Sabetta F. (eds) {\em Thirteenth International Conference on Numerical Methods in Fluid Dynamics}, Lecture Notes in Physics, vol 414. Springer, Berlin, Heidelberg, 1993.


\bibitem{wendroff}B. Wendroff, A two-dimensional HLLE Riemann solver and associated Godunov-type difference
scheme for gas dynamics, \emph{Comput. Math. Appl.},38 (1999) 175-185.

\bibitem{wilson} J.R. Wilson, Numerical study of fluid flow in a Kerrr space, \emph{Astrophys. J.},
173 (1972) 431-438.


\bibitem{wu2017a} K.L. Wu, Design of provably physical-constraint-preserving methods for general
relativistic hydrodynamics, \emph{Phys. Rev. D}, 95 (2017) 103001.



\bibitem{wu2014b}
K.L. Wu and H.Z. Tang, Finite volume local evolution Galerkin method for two-dimensional special relativistic hydrodynamics, \emph{J. Comput. Phys.}, 256 (2014) 277-307.

\bibitem{wu2015}K.L. Wu and H.Z. Tang, High-order accurate physical-constraints-preserving finite
difference WENO schemes for special relativistic hydrodynamics, \emph{J. Comput. Phys.}, 298 (2015) 539-564.

\bibitem{wu2016} K.L. Wu and H.Z. Tang, A direct Eulerian GRP scheme for spherically symmetric general relativistic hydrodynamics , \emph{SIAM J. Sci. Comput.}, 38 (2016) B458-B489.

\bibitem{wu2017}K.L. Wu and H.Z. Tang, Physical-constraints-preserving central discontinuous Galerkin methods
for special relativistic hydrodynamics with a general equation of state, \emph{Astrophys. J. Suppl. Ser.},
228 (2017) 3.

\bibitem{wu2017m3as}
K.~L. Wu and H.~Z. Tang.
\newblock {Admissible states and physical-constraints-preserving schemes for
  relativistic magnetohydrodynamic equations}.
\newblock {\em Math. Models Methods Appl. Sci.}, 27 (2017) 1871-1928.


\bibitem{wu2018zamp}
K.~L. Wu and H.~Z. Tang.
\newblock {On physical-constraints-preserving schemes for special relativistic
  magnetohydrodynamics with a general equation of state}.
\newblock {\em Z. Angew. Math. Phys.}, 69 (2018) 84.

\bibitem{wu2014} K.L. Wu, Z.C. Yang and H.Z. Tang,
    A third-order accurate direct Eulerian GRP scheme for one-dimensional relativistic hydrodynamics,
    \emph{East Asian J. Appl. Math.}, 4 (2014) 95-131.


\bibitem{xu}Z.F. Xu, Parameterized maximum principle preserving flux limiters for high order schemes
solving hyperbolic conservation laws: one-dimensional scalar problem, \emph{Math. Comput.},
83 (2014) 2213-2238.
%
%

\bibitem{yang2011direct}Z.C. Yang, P. He and H.Z. Tang,
A direct Eulerian GRP scheme for relativistic hydrodynamics: one-dimensional case,
\emph{J. Comput. Phys.}, 230 (2011) 7964-7987.

\bibitem{yang2012direct}Z.C. Yang and H.Z. Tang, A direct Eulerian GRP scheme for relativistic hydrodynamics:
two-dimensional case, \emph{J. Comput. Phys.}, 231 (2012) 2116-2139.

\bibitem{Yuan-Tang2020}
Y.H. Yuan and H.Z. Tang, Two-stage fourth-order accurate time discretizations for 1D and 2D special relativistic hydrodynamics,
\emph{J. Comput. Math.}, 38 (2020) 746-774.


\bibitem{zanna}L.D. Zanna and N. Bucciantini,
    An efficient shock-capturing central-type scheme for multidimensional relativistic flows,
    I: hydrodynamics, \emph{Astron. Astrophys.}, 390 (2002) 1177-1186.




\bibitem{zhao}
    J. Zhao and H.Z. Tang,
    Runge-Kutta discontinuous Galerkin methods with WENO limiter for the special
    relativistic hydrodynamics, \emph{J. Comput. Phys.}, 242 (2013) 138-168.

\bibitem{ZhaoTang-JCP2017}
J.~Zhao and H.~Z. Tang.
\newblock {Runge--Kutta discontinuous Galerkin methods for the special
  relativistic magnetohydrodynamics}.
\newblock {\em J. Comput. Phys.}, 343 (2017) 33-72.

\bibitem{ZhaoTang-CiCP2017}
J.~Zhao and H.~Z. Tang.
\newblock {Runge-Kutta central discontinuous Galerkin methods for the special
  relativistic hydrodynamics}.
\newblock {\em Commun. Comput. Phys.}, 22(2017) 643-682.

\end{thebibliography}
\end{document}